\newtheorem{theorem}{Theorem}[section]
\theoremstyle{plain}
\newtheorem{lemma}{Lemma}[section]
\numberwithin{equation}{section}
\begin{document}
\title[Neo Balcobalancing Numbers]{NEO BALCOBALANCING NUMBERS}
\author{Ahmet Tekcan}
\address{Bursa Uludag University, Faculty of Science, Department of
Mathematics, Bursa, Turkiye}
\email{tekcan@uludag.edu.tr}
\date{14.04.2025}
\subjclass[2000]{11B37, 11B39, 11D09, 11D79.}
\keywords{Balancing numbers, neo balancing numbers, neo cobalancing numbers,
triangular numbers, Pythagorean triples, Cassini identities, set of
representations, Pell equations.}

\begin{abstract}
In this work, we defined neo balcobalancing numbers, neo
Lucas-balcobalancing numbers, neo balcobalancers and neo
Lucas-balcobalancers and derived the general terms of these numbers in terms
of balancing numbers. Conversely we de\-du\-ced the general terms of
balancing, cobalancing, Lucas-ba\-lan\-cing and Lucas-cobalancing numbers in
terms of these numbers. We also deduced some relations on Binet formulas,
recurrence relations, relationship with Pell, Pell-Lucas, triangular, square
triangular numbers, Pythagorean triples and Cassini identities. We also
formulate the sum of first $n$-terms of these numbers and obtained some
formulas for the sums of Pell, Pell-Lucas, balancing and Lucas-cobalancing
numbers in terms of these numbers.
\end{abstract}

\maketitle

\section{Introduction}

A positive integer $n$ is called a balancing number (\cite{B-Pa}) if the
Diophantine equation 
\begin{equation}
1+2+\cdots +(n-1)=(n+1)+(n+2)+\cdots +(n+r)  \label{balans}
\end{equation}%
holds for some positive integer $r$ which is called balancer corresponding
to $n$. If $n$ is a balancing number with balancer $r$, then from (\ref%
{balans}) 
\begin{equation}
r=\frac{-2n-1+\sqrt{8n^{2}+1}}{2}.  \label{muty}
\end{equation}%
Though the definition of balancing numbers suggests that no balancing number
should be less than $2$. But from (\ref{muty}), they noted that $%
8(0)^{2}+1=1 $ and $8(1)^{2}+1=3^{2}$ are perfect squares. So they accepted $%
0$ and $1$ to be balancing numbers.

Later Panda and Ray (\cite{paray2}) defined that a positive integer $n$ is
called a co\-ba\-lan\-cing number if the Diophantine equation 
\begin{equation}
1+2+\cdots +n=(n+1)+(n+2)+\cdots +(n+r)  \label{cobalans}
\end{equation}
holds for some positive integer $r$ which is called cobalancer corresponding
to $n$. If $n$ is a cobalancing number with cobalancer $r$, then from (\ref%
{cobalans}) 
\begin{equation}
r=\frac{-2n-1+\sqrt{ 8n^{2}+8n+1}}{2}.  \label{muty1}
\end{equation}
From (\ref{muty1}), they noted that $8(0)^{2}+8(0)+1=1$ is a perfect square.
So they accepted $0$ to be a cobalancing number, just like Behera and Panda
accepted $0$ and $1$ to be balancing numbers.

Let $B_{n}$ denote the $n^{\text{th}}$ balancing number and let $b_{n}$
denote the $n^{\text{th}}$ cobalancing number. Then from (\ref{balans}) and (%
\ref{cobalans}), every balancing number is a cobalancer and every
cobalancing number is a balancer, that is, $B_{n}=r_{n+1}$ and $R_{n}=b_{n},$
where $R_{n}$ is the $n^{ \text{th}}$ the balancer and $r_{n}$ is the $n^{%
\text{th}}$ cobalancer. Since $R_{n}=b_{n}$, we get 
\begin{equation}
b_{n}=\frac{-2B_{n}-1+\sqrt{8B_{n}^{2}+1}}{2}\text{ \ and\ \ }B_{n}=\frac{
b_{n}+1+\sqrt{8b_{n}^{2}+8b_{n}+1}}{2}.  \label{Bb}
\end{equation}
Thus from (\ref{Bb}), $B_{n}$ is a balancing number if and only if $%
8B_{n}^{2}+1$ is a perfect square and $b_{n}$ is a cobalancing number if and
only if $8b_{n}^{2}+8b_{n}+1$ is a perfect square. Hence $C_{n}=\sqrt{
8B_{n}^{2}+1}$ and $c_{n}=\sqrt{8b_{n}^{2}+8b_{n}+1}$ are integers which are
called the Lucas-balancing number and Lucas-cobalancing number, respectively.

Let $\alpha =1+\sqrt{2}$ and $\beta =1-\sqrt{2}$ be the roots of the
characteristic equation for Pell and Pell-Lucas numbers which are the
numbers defined by $P_{0}=0,P_{1}=1,$ $P_{n}=2P_{n-1}+P_{n-2}$ and $%
Q_{0}=Q_{1}=2,Q_{n}=2Q_{n-1}+Q_{n-2}$ for $n\geq 2$. Ray proved in his PhD
thesis (\cite{raytez}) that the Binet formulas for all balancing numbers are 
$B_{n}=\frac{\alpha ^{2n}-\beta ^{2n}}{4\sqrt{2}},b_{n}=\frac{\alpha
^{2n-1}-\beta ^{2n-1}}{4\sqrt{2}}-\frac{1}{2},C_{n}=\frac{\alpha ^{2n}+\beta
^{2n}}{2}$ and $c_{n}=\frac{\alpha ^{2n-1}+\beta ^{2n-1}}{2}$ (see also \cite%
{tkcn1,olajas1,pa-ray}).

Balancing numbers and their generalizations have been investigated by
se\-ve\-ral authors from many aspects. In \cite{lip}, Liptai proved that
there is no Fi\-bo\-nac\-ci ba\-lan\-cing number except $1$ and in \cite%
{lip1} he proved that there is no Lucas balancing number. In \cite{szalay},
Szalay considered the same problem and obtained some nice results by a
different method. In \cite{tunde}, Kov\'{a}cs, Liptai and Olajos
ex\-ten\-ded the concept of balancing numbers to the $(a,b)$-balancing
numbers defined as follows: Let $a>0$ and $b\geq 0$ be coprime integers. If 
\begin{equation*}
(a+b)+\cdots +(a(n-1)+b)=(a(n+1)+b)+\cdots +(a(n+r)+b)
\end{equation*}%
for some positive integers $n$ and $r$, then $an+b$ is an $(a,b)$-balancing
number. The sequence of $(a,b)$-balancing numbers is denoted by $%
B_{m}^{(a,b)}$ for $m\geq 1$. In \cite{akos}, Liptai, Luca, Pint\'{e}r and
Szalay generalized the notion of balancing numbers to numbers defined as
follows: Let $y,k,l\in \mathbb{Z}^{+}$ such that $y\geq 4$. Then a
po\-si\-ti\-ve integer $x$ with $x\leq y-2$ is called a $(k,l)$-power
numerical center for $y$ if 
\begin{equation*}
1^{k}+\cdots +(x-1)^{k}=(x+1)^{l}+\cdots +(y-1)^{l}.
\end{equation*}%
They studied the number of solutions of the equation above and proved
several effective and ineffective finiteness results for $(k,l)$-power
numerical centers. For positive integers $k,x$, let 
\begin{equation*}
\Pi _{k}(x)=x(x+1)\dots (x+k-1).
\end{equation*}%
Then it was proved in \cite{tunde} that the equation $B_{m}=\Pi _{k}(x)$ for
fixed integer $k\geq 2$ has only infinitely many solutions and for $k\in
\{2,3,4\}$ all solutions were determined. In \cite{tengely}, Tengely
con\-si\-de\-red the case 
\begin{equation*}
B_{m}=x(x+1)(x+2)(x+3)(x+4)
\end{equation*}%
for $k=5$ and proved that this Diophantine equation has no solution for $%
m\geq 0$ and $x\in \mathbb{Z}$. In \cite{komats}, Panda, Ko\-mat\-su and
Davala considered the reciprocal sums of sequences involving balancing and
Lucas-balancing numbers. In \cite{raysums}, Ray considered the sums of
ba\-lan\-cing and Lucas-balancing numbers by matrix methods. In \cite{dash},
Dash, Ota, Dash considered the $t$-balancing numbers for an integer $t\geq 1$%
. They called that $n$ is a $t$ -ba\-lan\-cing number if the Diophantine
equation 
\begin{equation*}
1+2+\cdots +n-1=(n+1+t)+(n+2+t)+\cdots +(n+r+t)
\end{equation*}%
holds for some positive integer $r$ which is called $t$-balancer. A positive
integer $n$ is called a $t$-cobalancing number if the Diophantine equation 
\begin{equation*}
1+2+\cdots +n=(n+1+t)+(n+2+t)+\cdots +(n+r+t)
\end{equation*}%
holds for some positive integer $r$ which is called $t$-cobalancer. In \cite%
{alper}, Tekcan and Erdem determined the general terms of $t$%
-co\-ba\-lan\-cing and Lucas $t$-co\-ba\-lan\-cing numbers. In \cite{ayd},
Tekcan and Ayd\i n determined the general terms of $t$-balancing and Lucas $%
t $-balancing numbers. In \cite{almost}, Panda and Panda defined that a
positive integer $n$ is called an almost ba\-lan\-cing number if the
Di\-ophan\-ti\-ne equation 
\begin{equation*}
\left\vert \lbrack (n+1)+(n+2)+\cdots +(n+r)]-[1+2+\cdots +(n-1)]\right\vert
=1
\end{equation*}%
holds for some positive integer $r$ which is called the almost balancer. In \cite{alper1}, Tekcan and Erdem determined the general terms of all almost balancing numbers of first and second type in terms of balancing and Lucas-balancing numbers. In 
\cite{tkcnx}, Tekcan derived some results on almost balancing numbers,
triangular numbers and square triangular numbers and in \cite{tkcnxy}, Tekcan
considered the sums and spectral norms of all almost balancing numbers. In 
\cite{almosttez}, Panda defined that a positive integer $n$ is called an
almost cobalancing number if the Di\-op\-han\-ti\-ne equation 
\begin{equation*}
\left\vert \lbrack (n+1)+(n+2)+\cdots +(n+r)]-(1+2+\cdots +n)\right\vert =1
\end{equation*}%
holds for some positive integer $r$ which is called an almost cobalancer. In 
\cite{ozkoc}, \"{O}zko\c{c} and Tekcan defined $k$-balancing numbers and
derived some algebraic relations on them. In \cite{meryem, meryem1}, Tekcan
and Y\i ld\i z defined balcobalancing numbers. They sum both sides of (\ref%
{balans}) and (\ref{cobalans}) and get the Diophantine equation%
\begin{equation}
1+2+\cdots +(n-1)+1+2+\cdots +(n-1)+n=2[(n+1)+(n+2)+\cdots +(n+r)]
\label{bal-234}
\end{equation}%
and said that a positive integer $n$ is called a balcobalancing number if
the Diophantine equation in (\ref{bal-234}) verified for some positive
integer $r$ which is called balcobalancer. In \cite{ecem}, Tekcan and Akg%
\"{u}\c{c} defined almost neo cobalancing number. They said that a positive
integer $n$ is called an almost neo cobalancing number if the Diophantine
equation 
\begin{equation*}
\left\vert (n-1)+(n-0)+(n+1)+\cdots +(n+r) -(1+2+\cdots +n) \right\vert =1
\end{equation*}%
holds for some positive integer $r$ which is called almost neo cobalancer.
In \cite{tekneob}, Tekcan defined almost neo balancing number. He said that
a positive integer $n$ is called an almost neo balancing number if the
Diophantine equation 
\begin{equation*}
\left\vert (n-1)+(n-0)+(n+1)+\cdots +(n+r)-[1+2+\cdots +(n-1)] \right\vert =1
\end{equation*}%
holds for some positive integer $r$ which is called almost neo balancer. In 
\cite{ozdemir}, \"{O}zdemir introduced a new non-commutative number system
called hybrid numbers. The set of hybrid numbers, denoted by $\mathbb{\ K}$,
is defined by%
\begin{equation*}
\mathbb{K}=\{z=a+b\mathbf{i}+c\varepsilon+d\mathbf{h}:a,b,c,d\in \mathbb{R}
\},
\end{equation*}%
where $\mathbf{i}^{2}=-1,$ $\varepsilon^{2}=0,\mathbf{h}^{2}=1,\mathbf{ih}=-%
\mathbf{hi}=\varepsilon +\mathbf{i}$. In \cite{prod}, Br\'{o}d, Szynal-Liana
and W\l och defined balancing and Lucas-balancing hybrid numbers 
\begin{align*}
BH_{n}&=B_{n}+B_{n+1}\mathbf{i}+B_{n+2}\varepsilon+B_{n+3}\mathbf{h} \\
CH_{n}&=C_{n}+C_{n+1}\mathbf{i}+C_{n+2}\varepsilon+C_{n+3}\mathbf{h}.
\end{align*}%
In \cite{R-S}, Rubajczyk and Szynal-Liana defined cobalancing and
Lucas-co\-ba\-lan\-cing hybrid numbers 
\begin{align*}
bH_{n}&=b_{n}+b_{n+1}\mathbf{i}+b_{n+2}\varepsilon+b_{n+3}\mathbf{h} \\
cH_{n}&=c_{n}+c_{n+1}\mathbf{i}+c_{n+2}\varepsilon+c_{n+3}\mathbf{h}.
\end{align*}

\section{Neo Balcobalancing Numbers}

In \cite{chail}, Chailangka and Pakapongpun said that a positive integer $n$
is called a neo balancing number if the Diophantine equation 
\begin{equation}
1+2+\cdots +(n-1)=(n-1)+(n-0)+(n+1)+(n+2)+\cdots +(n+r)  \label{neobalans}
\end{equation}%
holds for some positive integer $r$ which is called neo balancer.

Later in \cite{meryemx1}, Tekcan and Y\i ld\i z defined neo cobalancing
numbers. They said that a positive integer $n$ is called a neo cobalancing
number if the Diophantine equation 
\begin{equation}
1+2+\cdots +n=(n-1)+(n-0)+(n+1)+(n+2)+\cdots +(n+r)  \label{neocobalans}
\end{equation}%
holds for some positive integer $r$ which is called neo cobalancer
corresponding to $n$.

As we did in \cite{meryem, meryem1}, if we sum both sides of (\ref{neobalans}%
) and (\ref{neocobalans}), then we get 
\begin{align}
&1+2+\cdots +(n-1)+1+2+\cdots +n=  \label{neobalcob} \\
&2[(n-1)+(n-0)+(n+1)+(n+2)+\cdots +(n+r)]  \notag
\end{align}
and said that a positive integer $n$ is called a neo balcobalancing number
if the Diophantine equation in (\ref{neobalcob}) verified for some positive
integer $r$ which is called neo balcobalancer.

From (\ref{neobalcob}), we get 
\begin{equation}
r=\frac{-2n-1+\sqrt{8n^{2}-12n+9}}{2}  \label{muter}
\end{equation}%
and 
\begin{equation}
n=2+r+\sqrt{2r^{2}+5r+2}.  \label{muter1}
\end{equation}

Let $B_{n}^{neobc}$ denote the $n^{\text{th}}$ neo balcobalancing number and
let $R_{n}^{neobc}$ denote the $n^{\text{th}}$ neo balcobalancer. Then from (%
\ref{muter}), $B_{n}^{neobc}$ is a neo balcobalancing number if and only if $%
8(B_{n}^{neobc})^{2}-12B_{n}^{neobc}+9$ is a perfect square. Thus 
\begin{equation}
C_{n}^{neobc}=\sqrt{8(B_{n}^{neobc})^{2}-12B_{n}^{neobc}+9}  \label{luk}
\end{equation}%
is an integer which is called the $n^{\text{th}}$ neo Lucas-balcobalancing
number and from (\ref{muter1}), $R_{n}^{neobc}$ is a neo balcobalancer if
and only if $2(R_{n}^{neobc})^{2}+5R_{n}^{neobc}+2$ is a perfect square.
Thus 
\begin{equation}
CR_{n}^{neobc}=\sqrt{2(R_{n}^{neobc})^{2}+5R_{n}^{neobc}+2}  \label{luk1}
\end{equation}%
is an integer which is called the $n^{\text{th}}$ neo Lucas-balcobalancer
number.

In order to determine the general terms of neo balcobalancing numbers, neo
Lucas-bal\-co\-ba\-lancing numbers, neo balcobalancers and neo
Lucas-bal\-co\-ba\-lancers, we have to determine the set of all (positive)
integer solutions of some specific Pell equation. Since $B_{n}^{neobc}$ is a neo balcobalancing number if and only if $%
8(B_{n}^{neobc})^{2}-12B_{n}^{neobc}+9$ is a perfect square, we set 
\begin{equation*}
8(B_{n}^{neobc})^{2}-12B_{n}^{neobc}+9=y^{2}
\end{equation*}%
for some positive integer $y$. Then 
\begin{equation*}
16(B_{n}^{neobc})^{2}-24B_{n}^{neobc}+18=2y^{2}
\end{equation*}%
and hence 
\begin{equation*}
(4B_{n}^{neobc}-3)^{2}+9=2y^{2}.
\end{equation*}
Taking $x=4B_{n}^{neobc}-3$, we get the Pell equation (see \cite{barb, Jacob})
\begin{equation}
x^{2}-2y^{2}=-9.  \label{mutyhuy}
\end{equation}
Let $\Omega $ denotes the set of all (positive )integer solutions of (\ref%
{mutyhuy}). Then we can give the following theorem.

\begin{theorem}
\label{teo2.1}The set of all integer solutions of $x^{2}-2y^{2}=-9$ is  
\begin{equation*}
\Omega =\{(21B_{n-1}-3B_{n-2},15B_{n-1}-3B_{n-2}):n\geq 1\}.
\end{equation*}
\end{theorem}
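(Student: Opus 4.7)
The plan is to solve the Pell equation $x^{2}-2y^{2}=-9$ by the standard orbit method in $\mathbb{Z}[\sqrt{2}]$ and then identify the orbit of the fundamental solution with the sequence in the theorem. Writing a solution $(x,y)$ as $x+y\sqrt{2}\in\mathbb{Z}[\sqrt{2}]$, the set of solutions is stable under multiplication by the fundamental unit $\varepsilon = 3+2\sqrt{2}$ (which is the fundamental solution of $u^{2}-2v^{2}=1$). The key observation for connecting to balancing numbers is that $\varepsilon = \alpha^{2}$, where $\alpha = 1+\sqrt{2}$ is the base of the Binet formula for $B_{n}$ recalled in the introduction.

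First I would locate the fundamental class representatives. The standard bound for $x^{2}-Dy^{2}=N$ with $N<0$ forces any fundamental solution $(x,y)$ to satisfy $0\leq y\leq 3$ here, so a direct check of $y\in\{0,1,2,3\}$ yields only $(\pm 3,3)$. A short calculation, $(-3+3\sqrt{2})\varepsilon = 3+3\sqrt{2}$, shows that the two sign choices collapse into a single orbit. Consequently every positive solution has the form
\begin{equation*}
x_{n}+y_{n}\sqrt{2}=(3+3\sqrt{2})\,\varepsilon^{\,n-1}, \qquad n\geq 1,
\end{equation*}
which is equivalent to the initial data $(x_{1},y_{1})=(3,3)$ together with the linear recursion $x_{n+1}=3x_{n}+4y_{n}$ and $y_{n+1}=2x_{n}+3y_{n}$, obtained from expanding $(x_{n}+y_{n}\sqrt{2})(3+2\sqrt{2})$.

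It then remains to match this with the claimed closed form $x_{n}=21B_{n-1}-3B_{n-2}$ and $y_{n}=15B_{n-1}-3B_{n-2}$, which I would do by induction. For the base case $n=1$, the extension $B_{-1}=-1$ (forced by $B_{1}=6B_{0}-B_{-1}$) gives exactly $(3,3)$; the case $n=2$ gives $(21,15)$. For the inductive step one computes $3x_{n}+4y_{n}=123B_{n-1}-21B_{n-2}$, and then applies $B_{n}=6B_{n-1}-B_{n-2}$ to rewrite this as $21B_{n}-3B_{n-1}=x_{n+1}$; the argument for $y_{n+1}=2x_{n}+3y_{n}$ is identical.

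The only genuinely substantive step is verifying that $(3,3)$ is, up to orbit, the unique fundamental solution, which the bound reduces to four cases. Everything after that is a mechanical induction, and the explicit presence of $B_{n-1}$ and $B_{n-2}$ in the formula is essentially the translation of the identity $\varepsilon^{n-1}=\alpha^{2(n-1)}$ through the Binet formula $B_{n}=(\alpha^{2n}-\beta^{2n})/(4\sqrt{2})$.
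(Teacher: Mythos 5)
Your proposal is correct and follows essentially the same route as the paper: the paper's representative $[\pm 3\ \ 3]$ and automorph $M=\left(\begin{smallmatrix}3&2\\4&3\end{smallmatrix}\right)$ are exactly your fundamental solutions $(\pm 3,3)$ and multiplication by the unit $3+2\sqrt{2}$, so both proofs generate the same orbit. The only (cosmetic) difference is that the paper identifies the orbit with $21B_{n-1}-3B_{n-2}$, $15B_{n-1}-3B_{n-2}$ via a closed form for $M^{n}$ in terms of balancing numbers, whereas you do it by induction using $B_{n}=6B_{n-1}-B_{n-2}$; you are in fact more explicit than the paper about the bound isolating the fundamental solutions, the collapse of the two sign classes into one orbit, and the convention $B_{-1}=-1$ needed for the case $n=1$.
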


\begin{proof}
For the Pell equation in (\ref{mutyhuy}), the indefinite quadratic form is $%
F=(1,0,-2)$ of dis\-cri\-mi\-nant $\Delta =8$. So $\tau _{\Delta }=3+2\sqrt{2%
}$. Thus the set of representatives is $\text{Rep}=\{[\pm 3\ \ \ 3]\}$ and $%
M=\left[ 
\begin{array}{cc}
3 & 2 \\ 
4 & 3%
\end{array}
\right] $ (see \cite[p. 118]{flath}). Here we note that 
\begin{equation*}
[x_{n}\ \ \ y_{n}]=[3\ \ \ \ 3]M^{n-1}
\end{equation*}
for $n\geq 1$. Also it can be easily seen that the $n^{\text{th}}$ power of $%
M$ is 
\begin{equation*}
M^{n}=\left[ 
\begin{array}{cc}
3B_{n}-B_{n-1} & 2B_{n} \\ 
4B_{n} & 3B_{n}-B_{n-1}%
\end{array}
\right]
\end{equation*}%
for $n\geq 1$. So 
\begin{equation*}
\lbrack x_{n}\ \ \ \ y_{n}]=[3\ \ \ \ 3]M^{n-1}=[21B_{n-1}-3B_{n-2}~\ \ \ \
15B_{n-1}-3B_{n-2}].
\end{equation*}%
Thus $\Omega =\{(21B_{n-1}-3B_{n-2},15B_{n-1}-3B_{n-2}):n\geq 1\}$ as we
claimed.
\end{proof}

Before determining the general terms of neo balcobalancing numbers, neo 
Lucas-bal\-co\-ba\-lancing numbers, neo balcobalancers and neo 
Lucas-bal\-co\-ba\-lancers, we need the following Lemma first. 

\begin{lemma}\label{lem2.1}
For balancing numbers $B_{n}$, we have

\begin{enumerate}
	\item $\frac{441B_{2n-1}^{2}}{2}-63B_{2n-1}B_{2n-2}+\frac{%
		9B_{2n-2}^{2}}{2}+\frac{9}{2}=(15B_{2n-1}-3B_{2n-2})^{2}$ 
		
		\item $\frac{81B_{2n-1}^{2}}{8}-\frac{27B_{2n-1}B_{2n-2}}{4}+\frac{9B_{2n-2}^{2}}{8%
		}-\frac{9}{8}=9B_{2n-1}^{2}$
	\end{enumerate} 
for $n\geq 1$. 

\end{lemma}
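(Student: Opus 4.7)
My plan is to reduce both identities to a single Cassini-type identity for consecutive balancing numbers, namely
\begin{equation*}
B_{m+1}^{2}-6B_{m+1}B_{m}+B_{m}^{2}=1 \qquad (m\geq 0),
\end{equation*}
and then to establish that identity by induction on $m$ using the recurrence $B_{m+1}=6B_{m}-B_{m-1}$.

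First I would carry out the direct algebraic expansion in part (1): expand $(15B_{2n-1}-3B_{2n-2})^{2}$ as $225B_{2n-1}^{2}-90B_{2n-1}B_{2n-2}+9B_{2n-2}^{2}$, subtract from the left-hand side, and observe that the difference equals
\begin{equation*}
\tfrac{9}{2}\bigl(B_{2n-2}^{2}-6B_{2n-1}B_{2n-2}+B_{2n-1}^{2}-1\bigr),
\end{equation*}
which vanishes if and only if the Cassini-type identity above holds with $m=2n-2$. An identical manipulation in part (2), collecting the coefficients of $B_{2n-1}^{2}$, $B_{2n-1}B_{2n-2}$, $B_{2n-2}^{2}$ and the constant, reduces it to the same expression (up to the factor $9/8$), so both parts of the lemma follow from the single identity.

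Next I would prove $B_{m+1}^{2}-6B_{m+1}B_{m}+B_{m}^{2}=1$ by induction. The base case $m=0$ gives $B_{1}^{2}-6B_{1}B_{0}+B_{0}^{2}=1$, using $B_{0}=0$ and $B_{1}=1$. For the inductive step, assume the identity at index $m$ and substitute $B_{m+2}=6B_{m+1}-B_{m}$:
\begin{equation*}
B_{m+2}^{2}-6B_{m+2}B_{m+1}+B_{m+1}^{2}
=(6B_{m+1}-B_{m})^{2}-6(6B_{m+1}-B_{m})B_{m+1}+B_{m+1}^{2},
\end{equation*}
which after expansion collapses to $B_{m+1}^{2}-6B_{m+1}B_{m}+B_{m}^{2}=1$ by the inductive hypothesis. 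Alternatively, one can substitute the Binet form $B_{n}=(\alpha^{2n}-\beta^{2n})/(4\sqrt{2})$ with $\alpha=1+\sqrt{2}$, $\beta=1-\sqrt{2}$, and use $\alpha\beta=-1$ together with $\alpha^{4}-6\alpha^{2}+1=0=\beta^{4}-6\beta^{2}+1$ to get the identity in one line.

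There is no serious obstacle here; the only place a small care is needed is bookkeeping of the coefficients $441/2$, $63$, $9/2$ in part (1) and $81/8$, $27/4$, $9/8$ in part (2) to confirm that each side produces precisely the multiple $\tfrac{9}{2}$ (resp.\ $\tfrac{9}{8}$) of the Cassini-type expression. Once the reduction is done, the induction is a two-line computation.
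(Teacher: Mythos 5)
Your proposal is correct, and it takes a genuinely different route from the paper. The paper disposes of both identities in one line by substituting the Binet formula $B_{n}=\frac{\alpha ^{2n}-\beta ^{2n}}{4\sqrt{2}}$ and simplifying, i.e.\ a direct computation with $\alpha=1+\sqrt{2}$, $\beta=1-\sqrt{2}$. You instead collect coefficients and observe that each side differs from the other by a rational multiple of the single quadratic invariant $B_{m+1}^{2}-6B_{m+1}B_{m}+B_{m}^{2}-1$ (with $m=2n-2$), which you then kill by induction via $B_{m+2}=6B_{m+1}-B_{m}$. I checked the bookkeeping: for part (1) the difference is $-\tfrac{9}{2}\bigl(B_{2n-1}^{2}-6B_{2n-1}B_{2n-2}+B_{2n-2}^{2}-1\bigr)$ (you wrote $+\tfrac{9}{2}$, an immaterial sign since the bracket vanishes), and for part (2) it is $\tfrac{9}{8}$ times the same bracket, exactly as you predict; the base case $B_{0}=0$, $B_{1}=1$ and the inductive step both check out. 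Your argument is more structural: it exposes the common content of the two identities as one Pell-type (Cassini-type) relation for balancing numbers and stays entirely in integer arithmetic, whereas the paper's Binet substitution is shorter to state but conceals why the two parts are the same identity up to scaling. Your closing remark that the invariant also follows in one line from $\alpha^{4}-6\alpha^{2}+1=0=\beta^{4}-6\beta^{2}+1$ and $\alpha\beta=-1$ essentially recovers the paper's computation as a special case of your reduction.
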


\begin{proof}
Applying Binet formula $B_{n}=\frac{\alpha ^{2n}-\beta ^{2n}}{4\sqrt{2}}$,
we get the desired result. 
\end{proof}

Now from Theorem \ref{teo2.1}, we can give the following result.

\begin{theorem}
\label{teo2.2}The general terms of neo balcobalancing numbers, neo 
Lucas-bal\-co\-ba\-lancing numbers, neo balcobalancers and neo 
Lucas-bal\-co\-ba\-lancers are  
\begin{align*}
B_{n}^{neobc}& =\frac{21B_{2n-1}-3B_{2n-2}+3}{4} \\
C_{n}^{neobc}& =15B_{2n-1}-3B_{2n-2} \\
R_{n}^{neobc}& =\frac{9B_{2n-1}-3B_{2n-2}-5}{4} \\
CR_{n}^{neobc}& =3B_{2n-1}
\end{align*}
for $n\geq 1$.
\end{theorem}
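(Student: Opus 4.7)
The plan is to extract the neo balcobalancing numbers from the solution set $\Omega$ of Theorem \ref{teo2.1} and then propagate the formula through the definitions of $C_n^{neobc}$, $R_n^{neobc}$ and $CR_n^{neobc}$. Since $x=4B_n^{neobc}-3$, each $(x,y)\in\Omega$ produces a candidate $B^{neobc}=(x+3)/4$; enumerating the first few solutions $(3,3),(21,15),(123,87),(717,507),\ldots$ shows that only every other $x$ satisfies $x\equiv 1\pmod 4$, so only the even-indexed members of $\Omega$ give integer values. Re-indexing (replacing the dummy $n$ of Theorem \ref{teo2.1} by $2n$) then immediately yields
\[
B_n^{neobc}=\frac{21B_{2n-1}-3B_{2n-2}+3}{4},\qquad C_n^{neobc}=15B_{2n-1}-3B_{2n-2},
\]
and part~(1) of Lemma \ref{lem2.1} provides the verification that $8(B_n^{neobc})^2-12B_n^{neobc}+9=(C_n^{neobc})^2$, since direct substitution collapses the left-hand side to exactly the expression appearing there.

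Next I would compute the balcobalancer from (\ref{muter}), writing
\[
R_n^{neobc}=\frac{-2B_n^{neobc}-1+C_n^{neobc}}{2}.
\]
Substituting the two expressions above and clearing denominators collapses the right-hand side to $(9B_{2n-1}-3B_{2n-2}-5)/4$, as claimed. Finally, $(CR_n^{neobc})^2=2(R_n^{neobc})^2+5R_n^{neobc}+2$ from (\ref{luk1}); a short expansion with $A=9B_{2n-1}-3B_{2n-2}$ reduces the right-hand side to $(A^2-9)/8$, which is exactly the left-hand side of Lemma \ref{lem2.1}(2) and hence equals $9B_{2n-1}^2$. Taking positive square roots gives $CR_n^{neobc}=3B_{2n-1}$.

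The main obstacle is justifying the shift from $(n-1,n-2)$ to $(2n-1,2n-2)$ in the indexing of $\Omega$, i.e.\ the divisibility by $4$ needed for the two integer formulas. For this I would use the period-$4$ pattern $B_n\bmod 4=(0,1,2,3,0,1,2,3,\ldots)$, which follows by induction from $B_{n+1}\equiv 2B_n-B_{n-1}\pmod 4$. From it one checks that $B_{2n-1}+B_{2n-2}\equiv 1\pmod 4$ for every $n\geq 1$, and this single congruence simultaneously forces $21B_{2n-1}-3B_{2n-2}+3\equiv 0\pmod 4$ and $9B_{2n-1}-3B_{2n-2}-5\equiv 0\pmod 4$, so $B_n^{neobc}$ and $R_n^{neobc}$ are genuine positive integers. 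Once this integrality is in place, the rest of the argument is mechanical and driven entirely by the two identities of Lemma \ref{lem2.1}, which themselves follow from a routine Binet-formula calculation.
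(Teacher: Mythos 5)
Your proposal is correct and follows essentially the same route as the paper: read $B_n^{neobc}=(x+3)/4$ off the solution set of Theorem \ref{teo2.1}, then obtain $C_n^{neobc}$ and $CR_n^{neobc}$ via the two identities of Lemma \ref{lem2.1} and $R_n^{neobc}$ from the relation $R_n^{neobc}=\tfrac{-2B_n^{neobc}-1+C_n^{neobc}}{2}$ coming from (\ref{muter}). The one genuine addition is your mod-$4$ argument (via $B_{2n-1}+B_{2n-2}\equiv 1\pmod 4$) justifying why only the even-indexed solutions of $\Omega$ yield integers and hence why the index shifts from $n-1,n-2$ to $2n-1,2n-2$; the paper asserts this re-indexing without comment, so your version actually closes a small gap.
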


\begin{proof}
Since $x=4B_{n}^{neobc}-3$, we get from Theorem 2.1 that 
\begin{equation*}
B_{n}^{neobc}=\frac{21B_{2n-1}-3B_{2n-2}+3}{4}.
\end{equation*}%
So from (\ref{luk}), we get 
\begin{align*}
C_{n}^{neobc}& =\sqrt{8(B_{n}^{neobc})^{2}-12B_{n}^{neobc}+9} \\
& =\sqrt{8(\frac{21B_{2n-1}-3B_{2n-2}+3}{4})^{2}-12(\frac{
21B_{2n-1}-3B_{2n-2}+3}{4})+9} \\
& =\sqrt{\frac{441B_{2n-1}^{2}}{2}-63B_{2n-1}B_{2n-2}+\frac{9B_{2n-2}^{2}}{2}
+\frac{9}{2}} \\
& =\sqrt{(15B_{2n-1}-3B_{2n-2})^{2}} \\
& =15B_{2n-1}-3B_{2n-2}
\end{align*}%
by (1) of Lemma \ref{lem2.1} and from (\ref{muter}) 
\begin{align*}
R_{n}^{neobc}& =\frac{-2B_{n}^{neobc}-1+C_{n}^{neobc}}{2} \\
& =\frac{-2(\frac{21B_{2n-1}-3B_{2n-2}+3}{4})-1+(15B_{2n-1}-3B_{2n-2})}{2} \\
& =\frac{9B_{2n-1}-3B_{2n-2}-5}{4}.
\end{align*}%
Finally from (\ref{luk1}), we deduce that 
\begin{align*}
CR_{n}^{neobc}& =\sqrt{2(R_{n}^{neobc})^{2}+5R_{n}^{neobc}+2} \\
& =\sqrt{2(\frac{9B_{2n-1}-3B_{2n-2}-5}{4})^{2}+5(\frac{%
9B_{2n-1}-3B_{2n-2}-5 }{4})+2} \\
& =\sqrt{\frac{81B_{2n-1}^{2}}{8}-\frac{27B_{2n-1}B_{2n-2}}{4}+\frac{
9B_{2n-2}^{2}}{8}-\frac{9}{8}} \\
& =\sqrt{9B_{2n-1}^{2}} \\
& =3B_{2n-1}
\end{align*}
by (2) of Lemma \ref{lem2.1}. This completes the proof.
\end{proof}

Here we note that neo balcobalancing numbers should be positive from
definition. But from (\ref{muter}), we see that $8(0)^{2}-12(0)+9=3^{2}$ is
a perfect square. So we accepted $0$ to be a neo balcobalancing number, that
is, $B_{0}^{neobc}=0$. In this case $R_{0}^{neobc}=1$ and $%
C_{0}^{neobc}=CR_{0}^{neobc}=3.$

\section{Binet Formulas and Recurrence Relations}

\begin{theorem}
\label{teo3.1}Binet formulas for neo balcobalancing numbers, neo 
Lucas-bal\-co\-ba\-lancing numbers, neo balcobalancers and neo 
Lucas-bal\-co\-ba\-lancers are  
\begin{align*}
B_{n}^{neobc}& =\frac{3(\alpha ^{4n-1}+\beta ^{4n-1})+6}{8} \\
C_{n}^{neobc}& =\frac{3(\alpha ^{4n-1}-\beta ^{4n-1})}{2\sqrt{2}} \\
R_{n}^{neobc}& =\frac{3(\alpha ^{4n-2}+\beta ^{4n-2})-10}{8} \\
CR_{n}^{neobc}& =\frac{3(\alpha ^{4n-2}-\beta ^{4n-2})}{4\sqrt{2}}
\end{align*}
for $n\geq 1$.
\end{theorem}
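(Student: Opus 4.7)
The plan is to substitute Ray's Binet formula $B_k = (\alpha^{2k}-\beta^{2k})/(4\sqrt{2})$ directly into the four closed forms for $B_n^{neobc}$, $C_n^{neobc}$, $R_n^{neobc}$, $CR_n^{neobc}$ already derived in Theorem~\ref{teo2.2}, and then to simplify using only the elementary identities $\alpha^{2} = 3+2\sqrt{2}$, $\beta^{2} = 3-2\sqrt{2}$, $\alpha^{3} = 7+5\sqrt{2}$, $\beta^{3} = 7-5\sqrt{2}$, which follow at once from $\alpha = 1+\sqrt{2}$ and $\beta = 1-\sqrt{2}$.

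First I would dispose of the trivial case: the identity $CR_n^{neobc} = 3B_{2n-1}$ immediately yields $3(\alpha^{4n-2}-\beta^{4n-2})/(4\sqrt{2})$, which is the claim. For $C_n^{neobc} = 15B_{2n-1}-3B_{2n-2}$, I would place both terms over the common denominator $4\sqrt{2}$, factor $\alpha^{4n-4}$ and $\beta^{4n-4}$ out of the numerator, and then apply the reductions $15\alpha^{2}-3 = 6\alpha^{3}$ and $15\beta^{2}-3 = 6\beta^{3}$ to collapse the expression to $6(\alpha^{4n-1}-\beta^{4n-1})/(4\sqrt{2})$, exactly the asserted formula.

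The two remaining cases follow the same factor-and-reduce routine, with an additive constant to carry along. For $B_n^{neobc}$, clearing the $8$ on both sides reduces the claim to the identity $14B_{2n-1}-2B_{2n-2} = \alpha^{4n-1}+\beta^{4n-1}$. Expanding the left side over $4\sqrt{2}$ and factoring $\alpha^{4n-4}$, $\beta^{4n-4}$, the coefficients $(14\alpha^{2}-2)/\sqrt{2}$ and $(14\beta^{2}-2)/\sqrt{2}$ simplify to $4\alpha^{3}$ and $-4\beta^{3}$ respectively, the sign flip on the $\beta$-side reflecting that $5\sqrt{2}-7 = -\beta^{3}$. The residual constant $+3$ from Theorem~\ref{teo2.2} then produces exactly the $+6$ in the target numerator. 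An entirely analogous computation, replacing the coefficient pair $(14,-2)$ by $(6,-2)$ and using $(6\alpha^{2}-2)/\sqrt 2 = 4\alpha^{2}\sqrt{2}/\sqrt 2$-type reductions to recover $\alpha^{4n-2}$ and $\beta^{4n-2}$, handles $R_n^{neobc}$ and turns the constant $-5$ into the $-10$ in its Binet formula.

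The only real care needed is sign bookkeeping: since $\beta$ is negative and the Binet denominator carries a $\sqrt{2}$, every reduction involves a sign determined by the parity of the exponent of $\beta$. No deeper obstacle is expected, as each identity invoked reduces mechanically to $\alpha\beta = -1$ and $\alpha+\beta = 2$.
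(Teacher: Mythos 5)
Your proposal is correct and follows essentially the same route as the paper: substitute Ray's Binet formula for $B_k$ into the closed forms of Theorem \ref{teo2.2}, factor out a common power of $\alpha$ and $\beta$, and reduce the resulting coefficients using $\alpha=1+\sqrt{2}$, $\beta=1-\sqrt{2}$ (the paper factors out $\alpha^{4n}$ and $\beta^{4n}$ where you factor $\alpha^{4n-4}$ and $\beta^{4n-4}$, an immaterial difference). All of your coefficient reductions, e.g. $15\alpha^{2}-3=6\alpha^{3}$ and $14\alpha^{2}-2=4\sqrt{2}\,\alpha^{3}$, check out, and you in fact carry out all four cases where the paper only writes out the first.
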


\begin{proof}
We note that $B_{n}=\frac{\alpha ^{2n}-\beta ^{2n}}{4\sqrt{2}}$. Thus from
Theorem \ref{teo2.2}, we deduce that 
\begin{align*}
B_{n}^{neobc}& =\frac{21B_{2n-1}-3B_{2n-2}+3}{4} \\
& =\frac{21(\frac{\alpha ^{4n-2}-\beta ^{4n-2}}{4\sqrt{2}})-3(\frac{\alpha
^{4n-4}-\beta ^{4n-4}}{4\sqrt{2}})+3}{4} \\
& =\frac{\frac{\alpha ^{4n}(21\alpha ^{-2}-3\alpha ^{-4})-\beta
^{4n}(21\beta ^{-2}-3\beta ^{-4})}{4\sqrt{2}}+3}{4} \\
& =\frac{\frac{\alpha ^{4n}(-3+3\sqrt{2})-\beta ^{4n}(3+3\sqrt{2})}{2}+3}{4}
\\
& =\frac{3(\alpha ^{4n-1}+\beta ^{4n-1})+6}{8}.
\end{align*}
The others can be proved similarly.
\end{proof}

\begin{theorem}
\label{teo3.2}Recurrence relations for neo balcobalancing numbers, neo 
Lucas-bal\-co\-ba\-lancing numbers, neo balcobalancers and neo 
Lucas-bal\-co\-ba\-lancers are  
\begin{align*}
B_{n}^{neobc}& =35B_{n-1}^{neobc}-35B_{n-2}^{neobc}+B_{n-3}^{neobc} \\
C_{n}^{neobc}& =35C_{n-1}^{neobc}-35C_{n-2}^{neobc}+C_{n-3}^{neobc} \\
R_{n}^{neobc}& =35R_{n-1}^{neobc}-35R_{n-2}^{neobc}+R_{n-3}^{neobc}
\end{align*}
for $n\geq 3$ and  
\begin{align*}
CR_{n}^{neobc}&=35CR_{n-1}^{neobc}-35CR_{n-2}^{neobc}+CR_{n-3}^{neobc}
\end{align*}
for $n\geq 4$.
\end{theorem}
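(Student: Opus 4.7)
The plan is to read the recurrence directly off the Binet formulas of Theorem \ref{teo3.1}. Set $\gamma=\alpha^{4}$ and $\delta=\beta^{4}$. From $\alpha^{2}+\beta^{2}=6$ and $\alpha^{2}\beta^{2}=1$ one obtains $\gamma+\delta=6^{2}-2=34$ and $\gamma\delta=1$, so $\gamma$ and $\delta$ are the two roots of $t^{2}-34t+1=0$. Adjoining the constant root $1$ produces
\begin{equation*}
(X-1)(X^{2}-34X+1)=X^{3}-35X^{2}+35X-1,
\end{equation*}
which is exactly the characteristic polynomial of the proposed three-term recurrence. Hence every sequence of the form $p\,\gamma^{n}+q\,\delta^{n}+s$ with $p,q,s$ independent of $n$ satisfies $X_{n}=35X_{n-1}-35X_{n-2}+X_{n-3}$.

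Each of the four sequences in Theorem \ref{teo3.1} fits this template: $B_{n}^{neobc}$ uses $(p,q,s)=(3\alpha^{-1}/8,\,3\beta^{-1}/8,\,3/4)$, $R_{n}^{neobc}$ uses $(3\alpha^{-2}/8,\,3\beta^{-2}/8,\,-5/4)$, while $C_{n}^{neobc}$ and $CR_{n}^{neobc}$ have the same shape with $s=0$. Consequently, wherever the Binet expression is in force, the sequence automatically obeys the three-term recurrence.

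The only delicate point is pinning down the starting index. A direct substitution confirms that the Binet formulas for $B^{neobc}$, $C^{neobc}$ and $R^{neobc}$ reproduce the accepted initial values $B_{0}^{neobc}=0$, $C_{0}^{neobc}=3$, $R_{0}^{neobc}=1$, so the recurrence is legitimate for every $n\geq 3$. The lone exception is $CR^{neobc}$: the Binet expression at $n=0$ evaluates to $\tfrac{3(\beta^{2}-\alpha^{2})}{4\sqrt{2}}=-3$, whereas the convention assigns $CR_{0}^{neobc}=3$. Plugging $n=3$ into the recurrence therefore fails by $6$. Once $n\geq 4$, however, all three lagged indices $n-1,n-2,n-3$ lie in $\{1,2,\dots\}$ where Binet and the defined sequence agree, and the recurrence is restored. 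Handling this single mismatch at $n=0$ cleanly is the only real obstacle; the remainder is routine algebra.
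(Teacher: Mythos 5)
Your proof is correct, and it takes a somewhat different route from the paper's. The paper starts from Theorem \ref{teo2.2} (the closed forms in terms of balancing numbers), ``notices'' the identity $35B_{2n-3}-35B_{2n-5}+B_{2n-7}=B_{2n-1}$ together with its shift, and pushes the recurrence through those linear expressions. You instead read everything off the Binet formulas of Theorem \ref{teo3.1}: each sequence has the form $p\gamma^{n}+q\delta^{n}+s$ with $\gamma=\alpha^{4}$, $\delta=\beta^{4}$, and $(X-1)(X^{2}-34X+1)=X^{3}-35X^{2}+35X-1$ is exactly the characteristic polynomial of the claimed recurrence. The two arguments are equivalent in substance --- the balancing-number identity the paper invokes without proof is itself a consequence of the same polynomial --- but yours is more systematic, disposes of all four sequences at once, and, crucially, is the only one that accounts for the split between $n\geq 3$ and $n\geq 4$: the Binet formulas reproduce the conventional values $B_{0}^{neobc}=0$, $C_{0}^{neobc}=3$, $R_{0}^{neobc}=1$, but give $-3$ rather than the assigned $3$ for $CR_{0}^{neobc}$, so the $CR$ recurrence fails at $n=3$ by exactly $6$ (indeed $35\cdot 105-35\cdot 3+3=3573\neq 3567=CR_{3}^{neobc}$) and only becomes valid from $n=4$ on. The paper's proof is silent on this index issue, so your treatment is the more complete one.
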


\begin{proof}
First we notice that $35B_{2n-3}-35B_{2n-5}+B_{2n-7}=B_{2n-1}$ and $%
35B_{2n-4}-35B_{2n-6}+B_{2n-8}=B_{2n-2}$. Thus from Theorem \ref{teo2.2}, we
get 
\begin{align*}
& 35B_{n-1}^{neobc}-35B_{n-2}^{neobc}+B_{n-3}^{neobc} \\
& =35(\frac{21B_{2n-3}-3B_{2n-4}+3}{4})-35(\frac{21B_{2n-5}-3B_{2n-6}+3}{4})
\\
& \ \ \ +(\frac{21B_{2n-7}-3B_{2n-8}+3}{4}) \\
& =\frac{
21(35B_{2n-3}-35B_{2n-5}+B_{2n-7})-3(35B_{2n-4}-35B_{2n-6}+B_{2n-8})+3}{4} \\
& =\frac{21B_{2n-1}-3B_{2n-2}+3}{4} \\
& =B_{n}^{neobc}.
\end{align*}%
The others can be proved similarly.
\end{proof}

\section{From Neo Balcobalancing Numbers to Balancing Numbers}

In Theorem \ref{teo2.2}, we deduced the general terms of neo balcobalancing
numbers, neo Lucas-bal\-co\-ba\-lancing numbers, neo balcobalancers and neo
Lucas-bal\-co\-ba\-lancers in terms of balancing numbers. Conversely, we can
give the general terms of balancing, cobalancing, Lucas-balancing and
Lucas-cobalancing numbers in terms of neo balcobalancing numbers and neo
Lucas-bal\-co\-ba\-lancing numbers as follows.

\begin{theorem}
\label{teo4.1}The general terms of balancing, cobalancing, Lucas-balancing 
and Lucas-cobalancing numbers 
\begin{align*}
B_{n}& =\frac{1}{6}\left\{ 
\begin{array}{cc}
4B_{\frac{n+1}{2}}^{neobc}-C_{\frac{n+1}{2}}^{neobc}-3 & n\geq 1\text{ odd }
\\ 
&  \\ 
20B_{\frac{n+2}{2}}^{neobc}-7C_{\frac{n+2}{2}}^{neobc}-15 & n\geq 2\text{
even}%
\end{array}
\right. \\
&\\
b_{n}& =\frac{1}{12}\left\{ 
\begin{array}{cc}
-16B_{\frac{n+1}{2}}^{neobc}+6C_{\frac{n+1}{2}}^{neobc}+6 & n\geq 1\text{
odd } \\ 
&  \\ 
20B_{\frac{n+2}{2}}^{neobc}-4B_{\frac{n}{2}}^{neobc}-7C_{\frac{n+2}{2}
}^{neobc}+C_{\frac{n}{2}}^{neobc}-18 & n\geq 2\text{ even}%
\end{array}
\right. \\
&\\
C_{n}& =\frac{1}{6}\left\{ 
\begin{array}{cc}
-8B_{\frac{n+1}{2}}^{neobc}+4C_{\frac{n+1}{2}}^{neobc}+6 & n\geq 1\text{ odd 
} \\ 
&  \\ 
60B_{\frac{n+2}{2}}^{neobc}-4B_{\frac{n}{2}}^{neobc}-21C_{\frac{n+2}{2}
}^{neobc}+C_{\frac{n}{2}}^{neobc}-42 & n\geq 2\text{ even}%
\end{array}
\right. \\
&\\
c_{n}& =\frac{1}{6}\left\{ 
\begin{array}{cc}
24B_{\frac{n+1}{2}}^{neobc}-8C_{\frac{n+1}{2}}^{neobc}-18 & n\geq 1\text{
odd } \\ 
&  \\ 
20B_{\frac{n+2}{2}}^{neobc}+4B_{\frac{n}{2}}^{neobc}-7C_{\frac{n+2}{2}
}^{neobc}-C_{\frac{n}{2}}^{neobc}-18 & n\geq 2\text{ even.}%
\end{array}
\right.
\end{align*}
\end{theorem}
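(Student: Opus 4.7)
The plan is to first invert the two equations of Theorem \ref{teo2.2} that express $B_n^{neobc}$ and $C_n^{neobc}$ as linear combinations of $B_{2n-1}$ and $B_{2n-2}$. The $2\times 2$ system
\begin{align*}
21B_{2n-1}-3B_{2n-2}&=4B_n^{neobc}-3,\\
15B_{2n-1}-3B_{2n-2}&=C_n^{neobc}
\end{align*}
is non-degenerate, and solving it yields
\[
B_{2n-1}=\frac{4B_n^{neobc}-C_n^{neobc}-3}{6},\qquad
B_{2n-2}=\frac{20B_n^{neobc}-7C_n^{neobc}-15}{6}.
\]
This already proves the formula for $B_n$: given $n\geq 1$, split by parity, writing $n=2m-1$ (odd) or $n=2m-2$ (even) with $m=(n+1)/2$ or $m=(n+2)/2$ respectively, and substitute.

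For the other three families I would use three short identities that follow directly from the Binet formulas recalled in the introduction, namely $C_n=3B_n-B_{n-1}$, $c_n=B_n+B_{n-1}$, and $2b_n+1=B_n-B_{n-1}$; each is a one-line $\alpha,\beta$ computation using $\alpha^2-1=2\alpha$, $\alpha^2+1=2\sqrt{2}\,\alpha$ (and their $\beta$ counterparts). With these in hand, every quantity is expressed through the two neo formulas for $B_n$ and $B_{n-1}$, and the resulting linear combinations simplify to the displayed right-hand sides.

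The only subtle point is the parity bookkeeping between $B_n$ and $B_{n-1}$. When $n$ is odd, $n-1$ is even, and the odd formula for $B_n$ and the even formula for $B_{n-1}$ both invoke the \emph{same} index $m=(n+1)/2$, so the output for $b_n$, $C_n$, $c_n$ involves only $B_m^{neobc}$ and $C_m^{neobc}$. When $n$ is even, $n-1$ is odd, and the two formulas call \emph{different} indices $(n+2)/2$ and $n/2$, which is exactly why the even-parity cases of the theorem contain four neo terms instead of two.

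The main obstacle is not conceptual but clerical: eight explicit linear combinations must be reduced to the stated normal forms. Once the inverted expressions for $B_{2n-1},B_{2n-2}$ and the identities $C_n=3B_n-B_{n-1}$, $c_n=B_n+B_{n-1}$, $2b_n+1=B_n-B_{n-1}$ are written down, each case is a routine substitution followed by collecting coefficients, and I would present only one sample (e.g.\ $b_n$ for even $n$) in full, leaving the rest as analogous computations.
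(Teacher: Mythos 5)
Your proposal is correct, and it reaches the theorem by a genuinely different route than the paper. The paper works entirely with the Binet formulas of Theorem \ref{teo3.1}: it rewrites $B_{2k-1}=\frac{\alpha^{4k-2}-\beta^{4k-2}}{4\sqrt{2}}$ as a linear combination of $\frac{3(\alpha^{4k-1}+\beta^{4k-1})+6}{8}$ and $\frac{3(\alpha^{4k-1}-\beta^{4k-1})}{2\sqrt{2}}$, obtains the odd case of $B_n$, and declares all remaining cases ``similar.'' You instead invert the nondegenerate $2\times 2$ system $21B_{2n-1}-3B_{2n-2}=4B_n^{neobc}-3$, $15B_{2n-1}-3B_{2n-2}=C_n^{neobc}$ coming from Theorem \ref{teo2.2}, which immediately yields $B_{2n-1}=\frac{4B_n^{neobc}-C_n^{neobc}-3}{6}$ and $B_{2n-2}=\frac{20B_n^{neobc}-7C_n^{neobc}-15}{6}$, and then reduce the other three sequences to these via the classical identities $C_n=3B_n-B_{n-1}$, $c_n=B_n+B_{n-1}$, $2b_n+1=B_n-B_{n-1}$ (all three are correct; I checked them against the Binet formulas, e.g.\ $\alpha-\alpha^{-1}=\beta-\beta^{-1}=2$ gives the last one). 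I also verified that your parity bookkeeping produces exactly the stated coefficients in all eight cases, including the boundary instance $n=1$ odd, where $B_{n-1}=B_0=0$ is correctly reproduced by the ``even'' branch with index $m=1$. What your approach buys is uniformity and transparency: one matrix inversion plus three one-line identities dispatches all eight formulas by pure coefficient collection, with no further $\alpha,\beta$ manipulation, whereas the paper's method requires a separate surd computation for each case and only exhibits one of them. The trade-off is that you rely on Theorem \ref{teo2.2} and the auxiliary identities rather than deriving everything from a single Binet template, but since those identities themselves are one-line Binet computations, nothing essential is lost.
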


\begin{proof}
Let $n=2k-1$ for some positive integer $k$. Then from Th\-e\-o\-rem \ref%
{teo3.1}, we get 
\begin{align*}
B_{2k-1} &=\frac{\alpha ^{4k-2}-\beta ^{4k-2}}{4\sqrt{2}} \\
&=\frac{\frac{\alpha ^{4k-1}(3\alpha ^{-1})+\beta ^{4k-1}(-3\beta ^{-1})}{2 
\sqrt{2}}}{6} \\
&=\frac{\alpha ^{4k-1}(\frac{3}{2}-\frac{3}{2\sqrt{2}})+\beta ^{4k-1}(\frac{%
3 }{2}+\frac{3}{2\sqrt{2}})+\frac{24}{8}-3}{6} \\
&=\frac{4\left[\frac{3(\alpha ^{4k-1}+\beta ^{4k-1})+6}{8}\right]-\left[ 
\frac{3(\alpha ^{4k-1}-\beta ^{4k-1})}{2\sqrt{2}}\right]-3}{6} \\
&=\frac{4B_{k}^{neobc}-C_{k}^{neobc}-3}{6}.
\end{align*}%
Thus 
\begin{equation*}
B_{n}=\frac{4B_{\frac{n+1}{2}}^{neobc}-C_{\frac{n+1}{2}}^{neobc}-3}{6}.
\end{equation*}
The other cases can be proved similarly.
\end{proof}

\section{Relationship with Pell and Pell-Lucas Numbers}

In Theorem \ref{teo2.2}, we can give the general terms of neo balcobalancing
numbers, neo Lucas-bal\-co\-ba\-lancing numbers, neo balcobalancers and neo
Lucas-bal\-co\-ba\-lancers in terms of balancing numbers. Similarly, we can
give the general terms of these numbers in terms of Pell and also Pell-Lucas
numbers as follows.

\begin{theorem}
\label{teo5.1}The general terms of neo balcobalancing numbers, neo 
Lucas-bal\-co\-ba\-lancing numbers, neo balcobalancers and neo 
Lucas-bal\-co\-ba\-lancers are 
\begin{align*}
B_{n}^{neobc}& =\frac{21P_{4n-2}-3P_{4n-4}+6}{8} \\
C_{n}^{neobc}& =\frac{15P_{4n-2}-3P_{4n-4}}{2}
\end{align*}
\begin{align*}
R_{n}^{neobc}& =\frac{9P_{4n-2}-3P_{4n-4}-10}{8} \\
CR_{n}^{neobc}& =\frac{3P_{4n-2}}{2}
\end{align*}
for $n\geq 1$.
\end{theorem}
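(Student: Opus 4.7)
The plan is to reduce Theorem \ref{teo5.1} to Theorem \ref{teo2.2} by means of a single identity relating balancing numbers and Pell numbers, namely
\begin{equation*}
B_{m}=\frac{P_{2m}}{2}\qquad(m\geq 0).
\end{equation*}
Once this identity is available, every one of the four formulas in Theorem \ref{teo5.1} follows by direct substitution of $B_{2n-1}=P_{4n-2}/2$ and $B_{2n-2}=P_{4n-4}/2$ into the corresponding formula from Theorem \ref{teo2.2} and a one-line simplification.

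First I would establish the identity $B_{m}=P_{2m}/2$. The Binet formula for balancing numbers, quoted in the introduction, is $B_{m}=(\alpha^{2m}-\beta^{2m})/(4\sqrt{2})$ with $\alpha=1+\sqrt{2}$ and $\beta=1-\sqrt{2}$. Since $\alpha$ and $\beta$ are also the roots of the characteristic equation for the Pell sequence, the standard Binet formula gives $P_{k}=(\alpha^{k}-\beta^{k})/(2\sqrt{2})$. Setting $k=2m$ yields $P_{2m}=(\alpha^{2m}-\beta^{2m})/(2\sqrt{2})=2B_{m}$, which is the claim.

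Next I would perform the four substitutions. For the neo balcobalancer and neo Lucas-balcobalancer formulas, inserting $B_{2n-1}=P_{4n-2}/2$ and $B_{2n-2}=P_{4n-4}/2$ into Theorem \ref{teo2.2} gives
\begin{align*}
B_{n}^{neobc}&=\frac{21(P_{4n-2}/2)-3(P_{4n-4}/2)+3}{4}=\frac{21P_{4n-2}-3P_{4n-4}+6}{8},\\
C_{n}^{neobc}&=15(P_{4n-2}/2)-3(P_{4n-4}/2)=\frac{15P_{4n-2}-3P_{4n-4}}{2},
\end{align*}
and analogously for $R_{n}^{neobc}$ and $CR_{n}^{neobc}$.

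I do not anticipate a substantive obstacle here; the argument is essentially a change of sequence, converting Theorem \ref{teo2.2} from a statement in $B_{m}$ to the equivalent statement in $P_{2m}$. The only care needed is to confirm that the balancing Binet formula quoted in the introduction and the standard Pell Binet formula use the same $\alpha,\beta$, which they do. If a Pell-Lucas companion is later desired, one would analogously note $Q_{2m}/2=C_{m}$ (from the Binet formulas $Q_{k}=\alpha^{k}+\beta^{k}$ and $C_{m}=(\alpha^{2m}+\beta^{2m})/2$) and substitute into formulas involving Lucas-balancing numbers.
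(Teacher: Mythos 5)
Your proposal is correct, and every step checks out: the identity $B_{m}=P_{2m}/2$ follows immediately from the two Binet formulas ($P_{2m}=\frac{\alpha^{2m}-\beta^{2m}}{2\sqrt{2}}=2\cdot\frac{\alpha^{2m}-\beta^{2m}}{4\sqrt{2}}=2B_{m}$, and one can sanity-check it on $B_{1}=1=P_{2}/2$, $B_{2}=6=P_{4}/2$, $B_{3}=35=P_{6}/2$), and substituting $B_{2n-1}=P_{4n-2}/2$, $B_{2n-2}=P_{4n-4}/2$ into Theorem \ref{teo2.2} reproduces all four displayed formulas exactly. Your route differs from the paper's in its decomposition: the paper starts from the Binet formulas of Theorem \ref{teo3.1} and, for each of the four quantities separately, rearranges the expression $\frac{3(\alpha^{4n-1}+\beta^{4n-1})+6}{8}$ through manipulations with $\alpha^{-2},\alpha^{-4}$ until the Pell Binet form $\frac{\alpha^{k}-\beta^{k}}{2\sqrt{2}}$ appears, whereas you isolate the single clean identity $B_{m}=P_{2m}/2$ once and then let Theorem \ref{teo2.2} do all the work. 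Both arguments ultimately rest on the same Binet machinery, but yours is more modular and essentially eliminates the case-by-case $\alpha,\beta$ algebra; the paper's version has the minor advantage of not depending on Theorem \ref{teo2.2} at all, deriving the Pell expressions directly from the closed forms of Theorem \ref{teo3.1}. Your closing remark about $Q_{2m}/2=C_{m}$ points in the right direction for Theorem \ref{teo5.2}, though note that that theorem is stated with odd-indexed Pell--Lucas numbers $Q_{4n-5}$ as well, so a further identity would be needed there.
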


\begin{proof}
Since $P_{n}=\frac{\alpha ^{n}-\beta ^{n}}{2\sqrt{2}}$, we get from Theorem %
\ref{teo3.1} that 
\begin{align*}
B_{n}^{neobc}& =\frac{3(\alpha ^{4n-1}+\beta ^{4n-1})+6}{8} \\
& =\frac{3[\alpha ^{4n}(-1+\sqrt{2})+\beta ^{4n}(-1-\sqrt{2})]+6}{8} \\
& =\frac{\alpha ^{4n}(\frac{21\alpha ^{-2}-3\alpha ^{-4}}{2\sqrt{2}})+\beta
^{4n}(\frac{-21\beta ^{-2}+3\beta ^{-4}}{2\sqrt{2}})+6}{8} \\
& =\frac{21(\frac{\alpha ^{4n-2}-\beta ^{4n-2}}{2\sqrt{2}})-3(\frac{\alpha
^{4n-4}-\beta ^{4n-4}}{2\sqrt{2}})+6}{8} \\
& =\frac{21P_{4n-2}-3P_{4n-4}+6}{8}.
\end{align*}%
The other cases can be proved similarly.
\end{proof}

As in Theorem \ref{teo5.1}, we can give the following result which can be
proved similarly.

\begin{theorem}
\label{teo5.2}The general terms of neo balcobalancing numbers, neo 
Lucas-bal\-co\-ba\-lancing numbers, neo balcobalancers and neo 
Lucas-bal\-co\-ba\-lancers are 
\begin{align*}
B_{n}^{neobc}& =\frac{36Q_{4n-4}+15Q_{4n-5}+6}{8} \\
C_{n}^{neobc}& =\frac{51Q_{4n-4}+21Q_{4n-5}}{4} \\
R_{n}^{neobc}& =\frac{15Q_{4n-4}+6Q_{4n-5}-10}{8} \\
CR_{n}^{neobc}& =\frac{21Q_{4n-4}+9Q_{4n-5}}{8}
\end{align*}
for $n\geq 2$.
\end{theorem}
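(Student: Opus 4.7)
The plan is to mirror the proof of Theorem \ref{teo5.1}, but starting from the Pell-Lucas Binet formula $Q_{n}=\alpha^{n}+\beta^{n}$ instead of the Pell one. The starting point is again the Binet formulas for $B_{n}^{neobc}, C_{n}^{neobc}, R_{n}^{neobc}, CR_{n}^{neobc}$ given in Theorem \ref{teo3.1}, so the task reduces to rewriting each of $\alpha^{4n-1}\pm\beta^{4n-1}$ and $\alpha^{4n-2}\pm\beta^{4n-2}$ as linear combinations of $Q_{4n-4}$ and $Q_{4n-5}$.

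The key algebraic observation is that since $\alpha$ and $\beta$ satisfy $x^{2}=2x+1$, squaring yields $x^{4}=4x^{2}+4x+1=12x+5$, and multiplying by $x$ gives $x^{3}=5x+2$. Thus $\alpha^{4n-1}=\alpha^{4n-5}\alpha^{4}=12\alpha^{4n-4}+5\alpha^{4n-5}$ and $\alpha^{4n-2}=\alpha^{4n-5}\alpha^{3}=5\alpha^{4n-4}+2\alpha^{4n-5}$, and likewise for $\beta$. Adding the $\alpha$ and $\beta$ versions gives
\begin{equation*}
\alpha^{4n-1}+\beta^{4n-1}=12Q_{4n-4}+5Q_{4n-5},\qquad \alpha^{4n-2}+\beta^{4n-2}=5Q_{4n-4}+2Q_{4n-5}.
\end{equation*}
Substituting the first into the Binet formula for $B_{n}^{neobc}$ immediately yields $B_{n}^{neobc}=(36Q_{4n-4}+15Q_{4n-5}+6)/8$, and the second substituted into the formula for $R_{n}^{neobc}$ yields $R_{n}^{neobc}=(15Q_{4n-4}+6Q_{4n-5}-10)/8$.

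The main obstacle is the antisymmetric terms appearing in $C_{n}^{neobc}$ and $CR_{n}^{neobc}$, since differences $\alpha^{k}-\beta^{k}=2\sqrt{2}P_{k}$ live on the Pell side. To cross over, I would derive the elementary bridge identities
\begin{equation*}
P_{4n-4}=\frac{Q_{4n-4}+Q_{4n-5}}{4},\qquad P_{4n-5}=\frac{Q_{4n-4}-Q_{4n-5}}{4},
\end{equation*}
which follow by plugging $Q_{n}=\alpha^{n}+\beta^{n}$ into the right-hand sides together with $\alpha+1=\sqrt{2}\alpha$ and $\beta+1=-\sqrt{2}\beta$. Then the same factorisations $\alpha^{4n-1}-\beta^{4n-1}=12(\alpha^{4n-4}-\beta^{4n-4})+5(\alpha^{4n-5}-\beta^{4n-5})$ and $\alpha^{4n-2}-\beta^{4n-2}=5(\alpha^{4n-4}-\beta^{4n-4})+2(\alpha^{4n-5}-\beta^{4n-5})$ give, after dividing the Binet expression for $C_{n}^{neobc}$ by $2\sqrt{2}$ and for $CR_{n}^{neobc}$ by $4\sqrt{2}$, the values $3(12P_{4n-4}+5P_{4n-5})$ and $\tfrac{3}{2}(5P_{4n-4}+2P_{4n-5})$ respectively. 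Substituting the bridge identities and collecting terms produces $(51Q_{4n-4}+21Q_{4n-5})/4$ and $(21Q_{4n-4}+9Q_{4n-5})/8$, matching the claim. The restriction $n\geq 2$ is simply to keep all indices nonnegative, and every step is a short routine manipulation; the only nontrivial ingredient is spotting the reduction $\alpha^{4}=12\alpha+5$ and the companion $P$-to-$Q$ bridge.
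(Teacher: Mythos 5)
Your proposal is correct, and it follows essentially the route the paper intends: the paper gives no separate proof of Theorem \ref{teo5.2}, only the remark that it is proved "similarly" to Theorem \ref{teo5.1}, i.e.\ by substituting the Binet formulas of Theorem \ref{teo3.1} and rewriting the powers of $\alpha,\beta$ in terms of the target sequence. Your reduction $\alpha^{4}=12\alpha+5$, $\alpha^{3}=5\alpha+2$ and the bridge identities $4P_{k}=Q_{k}+Q_{k-1}$, $4P_{k-1}=Q_{k}-Q_{k-1}$ are a clean way to organize that computation, and all four resulting expressions check out numerically (e.g.\ $B_{2}^{neobc}=180$, $C_{2}^{neobc}=507$).
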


Conversely, we can give the even and odd ordered Pell and Pell-Lucas numbers
in terms of neo balcobalancing numbers and neo Lucas-bal\-co\-ba\-lancing
numbers as follows.

\begin{theorem}
\label{teo5.3}The general terms of the even and odd ordered Pell numbers are
\begin{align*}
P_{2n}& =\frac{1}{3}\left\{ 
\begin{array}{cc}
4B_{\frac{n+1}{2}}^{neobc}-C_{\frac{n+1}{2}}^{neobc}-3 & n\geq 1\text{ odd }
\\ 
&  \\ 
20B_{\frac{n+2}{2}}^{neobc}-7C_{\frac{n+2}{2}}^{neobc}-15 & n\geq 2\text{
even}%
\end{array}
\right. \\
& \\
P_{2n-1}& =\frac{1}{6}\left\{ 
\begin{array}{cc}
-16B_{\frac{n+1}{2}}^{neobc}+6C_{\frac{n+1}{2}}^{neobc}+12 & n\geq 1\text{
odd } \\ 
&  \\ 
20B_{\frac{n+2}{2}}^{neobc}-4B_{\frac{n}{2}}^{neobc}-7C_{\frac{n+2}{2}
}^{neobc}+C_{\frac{n}{2}}^{neobc}-12 & n\geq 2\text{ even}%
\end{array}
\right.
\end{align*}
and the general terms of the even and odd ordered Pell-Lucas numbers are  
\begin{align*}
Q_{2n}& =\frac{1}{3}\left\{ 
\begin{array}{cc}
-8B_{\frac{n+1}{2}}^{neobc}+4C_{\frac{n+1}{2}}^{neobc}+6 & n\geq 1\text{ odd 
} \\ 
&  \\ 
60B_{\frac{n+2}{2}}^{neobc}-4B_{\frac{n}{2}}^{neobc}-21C_{\frac{n+2}{2}
}^{neobc}+C_{\frac{n}{2}}^{neobc}-42 & n\geq 2\text{ even}%
\end{array}
\right. \\
& \\
Q_{2n-1}& =\frac{1}{3}\left\{ 
\begin{array}{cc}
24B_{\frac{n+1}{2}}^{neobc}-8C_{\frac{n+1}{2}}^{neobc}-18 & n\geq 1\text{
odd } \\ 
&  \\ 
20B_{\frac{n+2}{2}}^{neobc}+4B_{\frac{n}{2}}^{neobc}-7C_{\frac{n+2}{2}
}^{neobc}-C_{\frac{n}{2}}^{neobc}-18 & n\geq 2\text{ even.}%
\end{array}
\right.
\end{align*}
\end{theorem}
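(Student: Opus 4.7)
The plan is to reduce Theorem 5.3 directly to Theorem \ref{teo4.1} by exploiting the elementary identities that express Pell and Pell--Lucas numbers at even and odd indices in terms of the ``base'' sequences $B_n$, $b_n$, $C_n$, $c_n$. Once those identities are in place, no new calculation with $\alpha^{4n}$, $\beta^{4n}$ is needed; the result drops out by substitution.

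First I would establish the four bridge identities
\begin{equation*}
P_{2n}=2B_{n},\qquad P_{2n-1}=2b_{n}+1,\qquad Q_{2n}=2C_{n},\qquad Q_{2n-1}=2c_{n}.
\end{equation*}
Each of these follows at once from the Binet formulas $P_n=\frac{\alpha^n-\beta^n}{2\sqrt{2}}$ and $Q_n=\alpha^n+\beta^n$ (the latter is implicit in the paper's conventions for $Q_n$) together with the Binet formulas for $B_n$, $b_n$, $C_n$, $c_n$ recalled in the introduction. The only one requiring a moment's care is $P_{2n-1}=2b_{n}+1$, which comes from the extra additive constant $-\tfrac12$ appearing in $b_n=\frac{\alpha^{2n-1}-\beta^{2n-1}}{4\sqrt{2}}-\frac12$.

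Second, I would plug the expressions from Theorem \ref{teo4.1} into the four bridge identities. The formula for $P_{2n}$ is then obtained from the formula for $B_n$ just by replacing the outer factor $\tfrac{1}{6}$ by $\tfrac{1}{3}$, and the same rescaling by $2$ turns the $c_n$ and $C_n$ formulas of Theorem \ref{teo4.1} into the stated expressions for $Q_{2n-1}$ and $Q_{2n}$. For $P_{2n-1}=2b_n+1$ I must additionally absorb the constant $+1$ into the numerator: in both parity cases this converts the constant term inside the braces of the $b_n$-formula (namely $+6$ when $n$ is odd and $-18$ when $n$ is even) into $+12$ and $-12$ respectively, matching the target formula exactly after distributing the $2$ and dividing by $6$.

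The arithmetic is entirely routine; the only place one could slip is the $P_{2n-1}$ case, where the $+1$ from the $-\tfrac12$ in the Binet formula for $b_n$ must be tracked through the parity split. Accordingly, I would present the $P_{2n}$ case in full as the model computation, then explicitly verify the constant-term bookkeeping for $P_{2n-1}$ in each parity, and finally indicate that the $Q_{2n}$ and $Q_{2n-1}$ cases are literal doublings of the $C_n$ and $c_n$ formulas of Theorem \ref{teo4.1} and so follow by the same substitution.
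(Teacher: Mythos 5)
Your proposal is correct, and all four bridge identities check out against the paper's conventions: with $P_n=\frac{\alpha^n-\beta^n}{2\sqrt{2}}$, $Q_n=\alpha^n+\beta^n$ (which is indeed the Binet form forced by $Q_0=Q_1=2$, $Q_n=2Q_{n-1}+Q_{n-2}$) and the Binet formulas for $B_n,b_n,C_n,c_n$ from the introduction, one gets $P_{2n}=2B_n$, $P_{2n-1}=2b_n+1$, $Q_{2n}=2C_n$, $Q_{2n-1}=2c_n$, and substituting Theorem \ref{teo4.1} then reproduces every displayed formula, including the constant-term shifts $+6\mapsto+12$ and $-18\mapsto-12$ in the $P_{2n-1}$ case. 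However, your route is genuinely different from the paper's: the author proves Theorem \ref{teo5.3} ``in the same way that Theorem \ref{teo5.1} was proved,'' i.e.\ by a fresh direct manipulation of the Binet formulas of Theorem \ref{teo3.1}, expanding powers of $\alpha$ and $\beta$ from scratch for each of the eight cases. Your derivation instead factors the statement through Theorem \ref{teo4.1} composed with the classical doubling identities linking Pell and Pell--Lucas numbers to the balancing family (identities already present in the Panda--Ray literature the paper cites). What your approach buys is economy and transparency: no new $\alpha,\beta$ computation is needed, the parallel structure between Theorems \ref{teo4.1} and \ref{teo5.3} is explained rather than merely observed, and the only bookkeeping risk (the $+1$ from the $-\tfrac12$ in $b_n$) is isolated and verified. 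What the paper's approach buys is self-containedness within its own Binet machinery and independence from Theorem \ref{teo4.1}. Both are valid; yours is arguably the cleaner proof, provided you state the bridge identities explicitly (with a one-line Binet verification or a citation) rather than leaving them as folklore.
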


\begin{proof}
It can be proved in the same way that Theorem \ref{teo5.1} was proved.
\end{proof}

Thus we construct one-to-one correspondence between all neo balcobalancing
numbers and Pell and Pell-Lucas numbers.

\section{Relationship with Triangular and Square Triangular Numbers}

Recall that triangular numbers denoted by $T_{n}$ are the numbers of the
form 
\begin{equation*}
T_{n}=\frac{n(n+1)}{2}.
\end{equation*}%
It is known that there is a correspondence between balancing numbers and
triangular numbers. From (\ref{balans}), we see that $n$ is a balancing number if and only if $n^{2}$ is a triangular number since
\begin{equation*}
\frac{(n+r)(n+r+1)}{2}=n^{2}.
\end{equation*}%
So 
\begin{equation}
T_{B_{n}+R_{n}}=B_{n}^{2}.  \label{lkuyt}
\end{equation}%
As in (\ref{lkuyt}), we can give the following theorem.

\begin{theorem}
\label{teo6.1}$B_{n}^{neobc}$ is a neo balcobalancing number if and only if $
(B_{n}^{neobc})^{2}-\frac{3B_{n}^{neobc}}{2}+1$ is a triangular number, that
is,  
\begin{equation*}
T_{B_{n}^{neobc}+R_{n}^{neobc}}=(B_{n}^{neobc})^{2}-\frac{3B_{n}^{neobc}}{2}
+1.
\end{equation*}
\end{theorem}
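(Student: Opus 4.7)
The strategy is to derive a closed-form polynomial relation between a neo balcobalancing number and its balancer directly from the defining Diophantine equation (\ref{neobalcob}), and then substitute that relation into $T_{B_{n}^{neobc}+R_{n}^{neobc}}$. Writing $n=B_{n}^{neobc}$ and $r=R_{n}^{neobc}$ for brevity, the plan is to show that this substitution collapses $T_{n+r}$ to $n^{2}-\frac{3n}{2}+1$, which will yield both the stated identity and the \emph{if and only if} characterization.

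First I would simplify each side of (\ref{neobalcob}). The left side equals $\frac{(n-1)n}{2}+\frac{n(n+1)}{2}=n^{2}$, while the right side is twice the sum of the $r+2$ consecutive integers from $n-1$ up to $n+r$, which equals $(r+2)(2n+r-1)$. Hence $n$ is a neo balcobalancing number with balancer $r$ if and only if $n^{2}=(r+2)(2n+r-1)$, or equivalently
\begin{equation*}
2nr+r^{2}=n^{2}-4n-r+2.
\end{equation*}
This relation can also be verified by squaring (\ref{muter}), which provides a useful consistency check.

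Next I would expand $T_{n+r}=\frac{(n+r)(n+r+1)}{2}$ directly. The numerator becomes $(n+r)(n+r+1)=n^{2}+2nr+r^{2}+n+r$, and substituting the displayed expression for $2nr+r^{2}$ gives
\begin{equation*}
(n+r)(n+r+1)=n^{2}+(n^{2}-4n-r+2)+n+r=2n^{2}-3n+2.
\end{equation*}
Dividing by $2$ recovers exactly $n^{2}-\frac{3n}{2}+1$, which is the claimed identity. Because every step is reversible, if $n^{2}-\frac{3n}{2}+1$ happens to equal a triangular number $T_{m}$ for some nonnegative integer $m$, then setting $r=m-n$ restores the defining relation, giving the converse direction of the iff.

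The main obstacle is purely bookkeeping: one must count that the right-hand side of (\ref{neobalcob}) contains exactly $r+2$ terms rather than $r+1$, because the two extra summands $n-1$ and $n-0$ are precisely what distinguishes the neo version from the classical balcobalancing setup. Once the correct term count is in place, the argument reduces to a single polynomial substitution.
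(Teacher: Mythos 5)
Your proof is correct and follows essentially the same route as the paper: both simplify the defining equation (\ref{neobalcob}) to a polynomial relation between $n$ and $r$ and then substitute it into $T_{n+r}=\frac{(n+r)(n+r+1)}{2}$ to obtain $n^{2}-\frac{3n}{2}+1$. The only cosmetic differences are that you sum the right-hand side as a single arithmetic progression of $r+2$ terms, $(r+2)(2n+r-1)$, where the paper splits off $(n-1)+(n-0)=2n-1$ before summing $(n+1)+\cdots+(n+r)$, and that you make the reversibility of the steps (hence the converse direction of the equivalence) explicit, which the paper leaves implicit.
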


\begin{proof}
Let $n$ be a neo balcobalancing number. Then from (\ref{neobalcob}), we get 
\begin{equation*}
\frac{(n-1)n}{2}+\frac{n(n+1)}{2}=2\left[ 2n-1+nr+\frac{r(r+1)}{2}\right]
\end{equation*}%
and hence 
\begin{equation*}
\frac{(n+r)(n+r+1)}{2}=n^{2}-\frac{3n}{2}+1.
\end{equation*}%
Thus 
\begin{equation*}
T_{B_{n}^{neobc}+R_{n}^{neobc}}=(B_{n}^{neobc})^{2}-\frac{3B_{n}^{neobc}}{2}
+1
\end{equation*}%
as we claimed.
\end{proof}

There are infinitely many triangular numbers that are also square numbers
which are called square triangular numbers and is denoted by $S_{n}$. Notice
that 
\begin{equation*}
S_{n}=s_{n}^{2}=\frac{t_{n}(t_{n}+1)}{2},
\end{equation*}%
where $s_{n}$ and $t_{n}$ are the sides of the corresponding square and
triangle. Their Binet formulas are 
\begin{equation}
S_{n}=\frac{\alpha ^{4n}+\beta ^{4n}-2}{32}\text{, }s_{n}=\frac{\alpha
^{2n}-\beta ^{2n}}{4\sqrt{2}}\text{ and }t_{n}=\frac{\alpha ^{2n}+\beta
^{2n}-2}{4}  \label{cvtyu}
\end{equation}%
for $n\geq 1$.

We can give the general terms of neo balcobalancing numbers, neo
Lucas-bal\-co\-ba\-lancing numbers, neo balcobalancers and neo
Lucas-bal\-co\-ba\-lancers in terms of square triangular numbers as follows:

\begin{theorem}
\label{teo6.2}The general terms of neo balcobalancing numbers, neo 
Lucas-bal\-co\-ba\-lancing numbers, neo balcobalancers and neo 
Lucas-bal\-co\-ba\-lancers are  
\begin{align*}
B_{n}^{neobc}& =\frac{6s_{2n-1}+3t_{2n-1}+3}{2} \\
C_{n}^{neobc}& =6s_{2n-1}+6t_{2n-1}+3 \\
R_{n}^{neobc}& =\frac{3t_{2n-1}-1}{2} \\
CR_{n}^{neobc}& =3s_{2n-1}
\end{align*}
for $n\geq 1$.
\end{theorem}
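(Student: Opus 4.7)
The plan is to reduce everything to Binet formulas and verify the four identities as closed-form algebraic checks. From equation (\ref{cvtyu}) I read off
\begin{equation*}
s_{2n-1}=\frac{\alpha^{4n-2}-\beta^{4n-2}}{4\sqrt{2}},\qquad t_{2n-1}=\frac{\alpha^{4n-2}+\beta^{4n-2}-2}{4},
\end{equation*}
while Theorem \ref{teo3.1} supplies the Binet expressions for $B_n^{neobc},C_n^{neobc},R_n^{neobc},CR_n^{neobc}$ in powers of $\alpha^{4n-1},\beta^{4n-1}$ or $\alpha^{4n-2},\beta^{4n-2}$. So each claimed identity reduces to an elementary algebraic equality between linear combinations of $\alpha^{4n-2},\beta^{4n-2}$ and constants.

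The two easy cases are $CR_n^{neobc}$ and $R_n^{neobc}$, since the exponents in Theorem \ref{teo3.1} already match those coming from $s_{2n-1},t_{2n-1}$. For $CR_n^{neobc}=3s_{2n-1}$ the substitution is immediate; for $R_n^{neobc}=(3t_{2n-1}-1)/2$ the only thing to check is that $3\cdot\frac{\alpha^{4n-2}+\beta^{4n-2}-2}{4}-1$, divided by $2$, equals $\frac{3(\alpha^{4n-2}+\beta^{4n-2})-10}{8}$, which follows after clearing fractions.

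The two more interesting cases are $B_n^{neobc}$ and $C_n^{neobc}$, since the Binet formulas from Theorem \ref{teo3.1} involve $\alpha^{4n-1},\beta^{4n-1}$ rather than $\alpha^{4n-2},\beta^{4n-2}$. Here I would use the identities
\begin{equation*}
\alpha^{4n-1}+\beta^{4n-1}=(\alpha^{4n-2}+\beta^{4n-2})+\sqrt{2}(\alpha^{4n-2}-\beta^{4n-2}),
\end{equation*}
\begin{equation*}
\alpha^{4n-1}-\beta^{4n-1}=(\alpha^{4n-2}-\beta^{4n-2})+\sqrt{2}(\alpha^{4n-2}+\beta^{4n-2}),
\end{equation*}
obtained from $\alpha=1+\sqrt{2}$ and $\beta=1-\sqrt{2}$. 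Substituting these into the Binet formulas gives expressions of the form $A(\alpha^{4n-2}-\beta^{4n-2})/\sqrt{2}+B(\alpha^{4n-2}+\beta^{4n-2})+\text{const}$, which line up precisely with the combinations $6s_{2n-1}+3t_{2n-1}+3$ and $6s_{2n-1}+6t_{2n-1}+3$ after collecting rational and $\sqrt{2}$ parts.

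I expect no real obstacle here; the only mildly fiddly part is keeping track of the constants coming from the $-2$ inside the Binet formula for $t_{2n-1}$, which interact with the additive constants $+6$ and $-10$ appearing in Theorem \ref{teo3.1}. Once those are reconciled, the proof is a direct Binet-formula verification analogous to the proofs of Theorems \ref{teo3.1} and \ref{teo5.1}, and I would state it in that compact style: write out one case in full and say the others follow in the same way.
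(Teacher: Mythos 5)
Your proposal is correct and follows essentially the same route as the paper: both start from the Binet formulas of Theorem \ref{teo3.1} and of $s_{2n-1},t_{2n-1}$ in (\ref{cvtyu}), and shift the exponent from $4n-1$ to $4n-2$ using $\alpha=1+\sqrt{2}$, $\beta=1-\sqrt{2}$ before matching coefficients. The paper phrases this shift as inserting factors $\alpha^{-1},\beta^{-1}$ inside the bracket rather than as your two displayed identities, but the computation is the same.
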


\begin{proof}
From Theorem \ref{teo3.1}, we get 
\begin{align*}
B_{n}^{neobc}& =\frac{3(\alpha ^{4n-1}+\beta ^{4n-1})+6}{8} \\
& =\frac{\frac{3}{4}(\alpha ^{4n-1}+\beta ^{4n-1})+\frac{3}{2}}{2} \\
& =\frac{\alpha ^{4n-1}[\frac{6\alpha ^{-1}}{4\sqrt{2}}+\frac{3\alpha ^{-1}}{
4}]+\beta ^{4n-1}[\frac{-6\beta ^{-1}}{4\sqrt{2}}+\frac{3\beta ^{-1}}{4}]+ 
\frac{3}{2}}{2} \\
& =\frac{6(\frac{\alpha ^{4n-2}-\beta ^{4n-2}}{4\sqrt{2}})+3(\frac{\alpha
^{4n-2}+\beta ^{4n-2}-2}{4})+3}{2} \\
& =\frac{6s_{2n-1}+3t_{2n-1}+3}{2}
\end{align*}
by (\ref{cvtyu}). The others can be proved similarly.
\end{proof}

Conversely, we can give the following theorem.

\begin{theorem}
\label{teo6.3}The general terms of $S_{n},s_{n}$ and $t_{n}$ are 
\begin{align*}
S_{n}& =\frac{1}{36}\left\{ 
\begin{array}{cc}
(4B_{\frac{n+1}{2}}^{neobc}-C_{\frac{n+1}{2}}^{neobc}-3)^{2} & n\geq 1\text{
odd } \\ 
&  \\ 
(20B_{\frac{n+2}{2}}^{neobc}-7C_{\frac{n+2}{2}}^{neobc}-15)^{2} & n\geq 2 
\text{ even}%
\end{array}
\right. \\
& \\
s_{n}& =\frac{1}{6}\left\{ 
\begin{array}{cc}
4B_{\frac{n+1}{2}}^{neobc}-C_{\frac{n+1}{2}}^{neobc}-3 & n\geq 1\text{ odd }
\\ 
&  \\ 
20B_{\frac{n+2}{2}}^{neobc}-7C_{\frac{n+2}{2}}^{neobc}-15 & n\geq 2\text{
even}%
\end{array}
\right. \\
& \\
t_{n}& =\frac{1}{12}\left\{ 
\begin{array}{cc}
-8B_{\frac{n+1}{2}}^{neobc}+4C_{\frac{n+1}{2}}^{neobc} & n\geq 1\text{ odd }
\\ 
&  \\ 
60B_{\frac{n+2}{2}}^{neobc}-4B_{\frac{n}{2}}^{neobc}-21C_{\frac{n+2}{2}
}^{neobc}+C_{\frac{n}{2}}^{neobc}-48 & n\geq 2\text{ even.}%
\end{array}
\right.
\end{align*}
\end{theorem}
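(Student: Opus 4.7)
The plan is to reduce Theorem~\ref{teo6.3} to Theorem~\ref{teo5.3} by recording three elementary Binet-level identities linking $s_{n}$, $t_{n}$ and $S_{n}$ with Pell and Pell-Lucas numbers, and then substituting the formulas already proven.

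First, comparing $s_{n}=\frac{\alpha^{2n}-\beta^{2n}}{4\sqrt{2}}$ from (\ref{cvtyu}) with $P_{m}=\frac{\alpha^{m}-\beta^{m}}{2\sqrt{2}}$ gives the identity $s_{n}=\tfrac{1}{2}P_{2n}$. Inserting the two formulas for $P_{2n}$ from Theorem~\ref{teo5.3} and dividing by $2$ (so that the outer coefficient $1/3$ becomes $1/6$) immediately yields the claimed expression for $s_{n}$ in both parity cases.

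Second, since $S_{n}=s_{n}^{2}$, I square the $s_{n}$ formula just obtained. The outer coefficient $(1/6)^{2}=1/36$ appears automatically and the bracketed factor is squared, producing exactly the asserted formula. As a sanity check, the square expands to $\frac{\alpha^{4n}+\beta^{4n}-2(\alpha\beta)^{2n}}{32}$, and with $\alpha\beta=-1$ this is $\frac{\alpha^{4n}+\beta^{4n}-2}{32}=S_{n}$ as in (\ref{cvtyu}).

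Third, using $Q_{m}=\alpha^{m}+\beta^{m}$ (verified from the initial values $Q_{0}=Q_{1}=2$) one gets $t_{n}=\frac{Q_{2n}-2}{4}$. Plugging in the formulas for $Q_{2n}$ from Theorem~\ref{teo5.3} and simplifying the constant terms --- in the odd case $6-6=0$, in the even case $-42-6=-48$, while the outer coefficient becomes $\tfrac{1}{3}\cdot\tfrac{1}{4}=\tfrac{1}{12}$ --- produces the stated $t_{n}$ formulas.

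The argument poses no real obstacle; it is pure bookkeeping once the three reductions $s_{n}=P_{2n}/2$, $S_{n}=s_{n}^{2}$ and $t_{n}=(Q_{2n}-2)/4$ are noted. An alternative, parallel to the proof of Theorem~\ref{teo6.2}, would instead start from the Binet formulas of Theorem~\ref{teo3.1}, split by the parity of~$n$, shift exponents by multiplying through by suitable powers of $\alpha$ and $\beta$, and compare with (\ref{cvtyu}); but going via Theorem~\ref{teo5.3} keeps the calculation short.
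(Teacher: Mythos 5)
Your proposal is correct, and it takes a genuinely different route from the paper. The paper proves Theorem~\ref{teo6.3} by a direct Binet computation: starting from $S_{2k-1}=\frac{\alpha^{8k-4}+\beta^{8k-4}-2}{32}$ in (\ref{cvtyu}), it factors the expression as a square, shifts exponents by inserting powers of $\alpha$ and $\beta$, and matches the result against the Binet formulas of Theorem~\ref{teo3.1}; the remaining five cases are left as ``similarly.'' You instead observe the three identities $s_{n}=\tfrac{1}{2}P_{2n}$, $S_{n}=s_{n}^{2}$ and $t_{n}=\tfrac{1}{4}(Q_{2n}-2)$ (all immediate from (\ref{cvtyu}) and the Binet forms $P_{m}=\frac{\alpha^{m}-\beta^{m}}{2\sqrt{2}}$, $Q_{m}=\alpha^{m}+\beta^{m}$, using $\alpha\beta=-1$) and then substitute the already-established formulas of Theorem~\ref{teo5.3}; I checked the constant bookkeeping ($6-6=0$ in the odd $t_{n}$ case, $-42-6=-48$ in the even case, and the coefficients $\tfrac{1}{6}$, $\tfrac{1}{36}$, $\tfrac{1}{12}$) and it all comes out right. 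What your route buys is uniformity and brevity: all six parity cases follow from one substitution each, with no further $\alpha,\beta$ manipulation. What it costs is a dependence on Theorem~\ref{teo5.3}, which the paper itself only justifies by analogy with Theorem~\ref{teo5.1}; the paper's direct computation is self-contained relative to Theorem~\ref{teo3.1}. There is no circularity in your reduction, since Theorem~\ref{teo5.3} is proved independently of the square triangular numbers, so the argument stands.
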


\begin{proof}
Let $n=2k-1$ for some positive integer $k$. Then from (\ref{cvtyu}), we get 
\begin{align*}
S_{2k-1}& =\frac{\alpha ^{8k-4}+\beta ^{8k-4}-2}{32} \\
& =\frac{\frac{9}{8}[\alpha ^{8k-4}-2(\alpha \beta )^{4k-2}+\beta ^{8k-4}]}{
36} \\
& =\frac{\left[ \frac{3}{2\sqrt{2}}(\alpha ^{4k-2}-\beta ^{4k-2})\right]
^{2} }{36} \\
& =\frac{\left[ \alpha ^{4k-1}(\frac{12}{8}-\frac{3}{2\sqrt{2}})+\beta
^{4k-1}(\frac{12}{8}+\frac{3}{2\sqrt{2}})\right] ^{2}}{36} \\
& =\frac{\left[ 4\left( \frac{3(\alpha ^{4k-1}+\beta ^{4k-1})+6}{8}\right)
-\left( \frac{3(\alpha ^{4k-1}-\beta ^{4k-1})}{2\sqrt{2}}\right) -3\right]
^{2}}{36} \\
& =\frac{(4B_{k}^{neobc}-C_{k}^{neobc}-3)^{2}}{36}
\end{align*}%
by Theorem \ref{teo3.1}. So 
\begin{equation*}
S_{n}=\frac{(4B_{\frac{n+1}{2}}^{neobc}-C_{\frac{n+1}{2}}^{neobc}-3)^{2}}{36}
\end{equation*}
for odd $n\geq 1$. The others can be proved similarly.
\end{proof}

Thus we construct one-to-one correspondence between all neo balcobalancing
numbers and square triangular numbers.

\section{Pythagorean Triples}

Notice that a Pythagorean triple consists of three positive integers $a,b,c$
such that $a^{2}+b^{2}=c^{2}$ and commonly written $(a,b,c)$. It is known
that 
\begin{equation*}
(2P_{n}P_{n+1},P_{n+1}^{2}-P_{n}^{2},P_{n+1}^{2}+P_{n}^{2})
\end{equation*}%
is a Pythagorean triple. Similarly we can give the following result.

\begin{theorem}
\label{teo7.1}If $n\geq 2$ is even, then  
\begin{align*}
\left(\frac{12B_{\frac{n+2}{2}}^{neobc}-4C_{\frac{n+2}{2}}^{neobc}-6}{6},%
\frac{12B_{\frac{n+2}{2}}^{neobc}-4C_{\frac{n+2}{2}}^{neobc}-12}{6},\frac{%
-16B_{\frac{n+2}{2}}^{neobc}+6C_{\frac{n+2}{2}}^{neobc}+12}{6}\right)
\end{align*}
is a Pythagorean triple and if $n\geq 1$ is odd, then $(a,b,c)$ is a
Pythagorean triple, where  
\begin{align*}
a &=\frac{20B_{\frac{n+3}{2}}^{neobc}+4B_{\frac{n+1}{2}}^{neobc}-7C_{\frac{
n+3}{2}}^{neobc}-C_{\frac{n+1}{2}}^{neobc}-12}{12} \\
b &=\frac{20B_{\frac{n+3}{2}}^{neobc}+4B_{\frac{n+1}{2}}^{neobc}-7C_{\frac{
n+3}{2}}^{neobc}-C_{\frac{n+1}{2}}^{neobc}-24}{12} \\
c &=\frac{20B_{\frac{n+3}{2}}^{neobc}-4B_{\frac{n+1}{2}}^{neobc}-7C_{\frac{
n+3}{2}}^{neobc}+C_{\frac{n+1}{2}}^{neobc}-12}{6}.
\end{align*}
\end{theorem}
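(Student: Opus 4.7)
The plan is to reduce the statement to the classical Pythagorean identity
\[
(2P_{n}P_{n+1})^{2}+(P_{n+1}^{2}-P_{n}^{2})^{2}=(P_{n+1}^{2}+P_{n}^{2})^{2},
\]
which holds for every $n\geq 1$ by direct expansion. I will show that the three coordinates of the triple are exactly $P_{n+1}^{2}-P_{n}^{2}$, $2P_{n}P_{n+1}$, $P_{n+1}^{2}+P_{n}^{2}$, up to the ordering of the first two legs; the parity of $n$ only controls which leg occupies the first slot.

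For $n\geq 2$ even, I set $k=(n+2)/2$ and substitute the Binet formulas of Theorem \ref{teo3.1}, namely $B_{k}^{neobc}=\frac{3(\alpha^{2n+3}+\beta^{2n+3})+6}{8}$ and $C_{k}^{neobc}=\frac{3(\alpha^{2n+3}-\beta^{2n+3})}{2\sqrt{2}}$, into each of $a$, $b$, $c$. The essential algebraic trick is that $\alpha\beta=-1$, so $(\alpha\beta)^{2}=1$ and hence $\alpha^{2n+3}\beta^{2}=\alpha^{2n+1}$, $\beta^{2n+3}\alpha^{2}=\beta^{2n+1}$. Combined with the factorizations $\tfrac{9}{2}\pm 3\sqrt{2}=\tfrac{3}{2}\alpha^{2}$ or $\tfrac{3}{2}\beta^{2}$ (which handle $a$ and $b$) and $-6\pm \tfrac{9\sqrt{2}}{2}=\pm\tfrac{3\sqrt{2}}{2}\beta^{2}$ or $\mp\tfrac{3\sqrt{2}}{2}\alpha^{2}$ (which handle $c$), the higher powers collapse back to $\alpha^{2n+1}$ and $\beta^{2n+1}$, producing
\[
a=\frac{\alpha^{2n+1}+\beta^{2n+1}+2}{4},\ b=\frac{\alpha^{2n+1}+\beta^{2n+1}-2}{4},\ c=\frac{\alpha^{2n+1}-\beta^{2n+1}}{2\sqrt{2}}.
\]
The standard Binet identities $P_{n+1}^{2}-P_{n}^{2}=\tfrac{1}{4}[\alpha^{2n+1}+\beta^{2n+1}+2(-1)^{n}]$, $2P_{n}P_{n+1}=\tfrac{1}{4}[\alpha^{2n+1}+\beta^{2n+1}-2(-1)^{n}]$ and $P_{n+1}^{2}+P_{n}^{2}=P_{2n+1}$ then give, for $n$ even with $(-1)^{n}=1$, the assignments $a=P_{n+1}^{2}-P_{n}^{2}$, $b=2P_{n}P_{n+1}$, $c=P_{n+1}^{2}+P_{n}^{2}$.

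For $n\geq 1$ odd, I repeat the argument with the two indices $k_{1}=(n+3)/2$ and $k_{2}=(n+1)/2$, so both Binet exponents $2n+5$ and $2n+1$ enter the substitution. The same $\alpha\beta=-1$ trick, now applied through $\alpha^{4}\beta^{4}=1$, eliminates the exponent $2n+5$ in favour of $2n+1$ and merges the two groups of contributions into the same closed form for $a$, $b$, $c$ as displayed above. Since now $(-1)^{n}=-1$, the Binet identities reassign $a=2P_{n}P_{n+1}$ and $b=P_{n+1}^{2}-P_{n}^{2}$, which is exactly why the first two coordinates in the odd statement are swapped relative to the even statement.

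The main obstacle is purely bookkeeping in the odd case, where four different neo balcobalancing and neo Lucas-balcobalancing quantities at two distinct indices appear in each numerator and the integer coefficients $20,4,7,1,12,24$ must be tracked carefully through the Binet substitution. Once that collapse is executed correctly, the identification of $(a,b,c)$ with the classical Pell Pythagorean triple is automatic and the relation $a^{2}+b^{2}=c^{2}$ follows directly from the identity in the first paragraph.
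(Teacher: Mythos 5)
Your proposal is correct, and it takes a genuinely different route from the paper's. The paper proves the even case by brute force: it substitutes the Binet formulas of Theorem \ref{teo3.1} into $a^{2}+b^{2}$, expands, and manipulates the resulting combination of $\alpha^{8k+6}$, $\beta^{8k+6}$ and $(\alpha\beta)^{4k+3}$ terms until it visibly equals $c^{2}$, leaving the odd case as ``similar.'' You instead simplify each coordinate \emph{individually} to the closed forms
\begin{equation*}
\frac{\alpha^{2n+1}+\beta^{2n+1}+2}{4},\qquad \frac{\alpha^{2n+1}+\beta^{2n+1}-2}{4},\qquad \frac{\alpha^{2n+1}-\beta^{2n+1}}{2\sqrt{2}}=P_{2n+1},
\end{equation*}
identify them with $P_{n+1}^{2}-P_{n}^{2}$, $2P_{n}P_{n+1}$ and $P_{n+1}^{2}+P_{n}^{2}$ via the standard Pell identities, and then quote the classical parametrization $(2PQ)^{2}+(Q^{2}-P^{2})^{2}=(Q^{2}+P^{2})^{2}$, which the paper states at the top of Section 7 but never actually uses in its proof. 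I checked the collapses: for $n$ even the coefficients $\tfrac{9}{2}\mp 3\sqrt{2}=\tfrac{3}{2}\beta^{2},\tfrac{3}{2}\alpha^{2}$ and $-6\pm\tfrac{9\sqrt{2}}{2}=\pm\tfrac{3\sqrt{2}}{2}\beta^{2},\mp\tfrac{3\sqrt{2}}{2}\alpha^{2}$ are right and $(\alpha\beta)^{2}=1$ does pull the exponent $2n+3$ back to $2n+1$; for $n$ odd the two index groups $\tfrac{n+3}{2}$ and $\tfrac{n+1}{2}$ contribute coefficients $\tfrac{3(2\pm\sqrt{2})}{4}$ on $\alpha^{2n+1},\beta^{2n+1}$ in complementary pairs, so the irrational parts cancel and the same three closed forms emerge. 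Your route buys more than the paper's: it exhibits the triples as exactly the near-isosceles Pell-generated Pythagorean triples (which is why the two legs always differ by $1$), and the sign $(-1)^{n}$ in $P_{n+1}^{2}-P_{n}^{2}=\tfrac{1}{4}[\alpha^{2n+1}+\beta^{2n+1}+2(-1)^{n}]$ versus $2P_{n}P_{n+1}=\tfrac{1}{4}[\alpha^{2n+1}+\beta^{2n+1}-2(-1)^{n}]$ transparently explains why the first two coordinates swap roles between the even and odd statements, something the paper's computation leaves opaque. The only cost is that you must actually carry out the odd-case bookkeeping you defer; as verified above, it goes through.
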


\begin{proof}
Let $n=2k$ for some positive integer $k$. Then from Theorem \ref{teo3.1}, we
get 
\begin{align*}
& \left( \frac{12B_{k+1}^{neobc}-4C_{k+1}^{neobc}-6}{6}\right) ^{2}+\left( 
\frac{12B_{k+1}^{neobc}-4C_{k+1}^{neobc}-12}{6}\right) ^{2} \\
& =8(B_{k+1}^{neobc})^{2}-\frac{16B_{k+1}^{neobc}C_{k+1}^{neobc}}{3}
-12B_{k+1}^{neobc}+\frac{8(C_{k+1}^{neobc})^{2}}{9}+4C_{k+1}^{neobc}+5 \\
& =8\left[ \frac{3(\alpha ^{4k+3}+\beta ^{4k+3})+6}{8}\right] ^{2}-\frac{16}{
3}\left[ \frac{3(\alpha ^{4k+3}+\beta ^{4k+3})+6}{8}\right] \left[ \frac{
3(\alpha ^{4k+3}-\beta ^{4k+3})}{2\sqrt{2}}\right] \\
&-12\left[ \frac{3(\alpha ^{4k+3}+\beta ^{4k+3})+6}{8}\right] +\frac{8 }{9}%
\left[ \frac{3(\alpha ^{4k+3}-\beta ^{4k+3})}{2\sqrt{2}}\right] ^{2}+4\left[ 
\frac{3(\alpha ^{4k+3}-\beta ^{4k+3})}{2\sqrt{2}}\right] +5 \\
& =\frac{17}{8}(\alpha ^{8k+6}+\beta ^{8k+6})-\frac{3}{\sqrt{2}}(\alpha
^{8k+6}-\beta ^{8k+6})+\frac{\alpha ^{4k+3}\beta ^{4k+3}}{4}+\frac{1}{2} \\
& =\frac{17}{8}(\alpha ^{8k+6}+\beta ^{8k+6})-\frac{3}{\sqrt{2}}(\alpha
^{8k+6}-\beta ^{8k+6})-\frac{\alpha ^{4k+3}\beta ^{4k+3}}{4} \\
& =\frac{64}{9}\left[ \frac{3(\alpha ^{4k+3}+\beta ^{4k+3})+6}{8}\right]
^{2}-\frac{16}{3}\left[ \frac{3(\alpha ^{4k+3}+\beta ^{4k+3})+6}{8}\right] %
\left[ \frac{3(\alpha ^{4k+3}-\beta ^{4k+3})}{2\sqrt{2}}\right] \\
&-\frac{32}{3}\left[ \frac{3(\alpha ^{4k+3}+\beta ^{4k+3})+6}{8} \right] +%
\left[ \frac{3(\alpha ^{4k+3}-\beta ^{4k+3})}{2\sqrt{2}}\right] ^{2} +4\left[
\frac{3(\alpha ^{4k+3}-\beta ^{4k+3})}{2\sqrt{2}}\right] +4 \\
& =\frac{64}{9}(B_{k+1}^{neobc})^{2}-\frac{16B_{k+1}^{neobc}C_{k+1}^{neobc}}{
3}-\frac{32}{3}B_{k+1}^{neobc}+(C_{k+1}^{neobc})^{2}+4C_{k+1}^{neobc}+4 \\
& =\left( \frac{-16B_{k+1}^{neobc}+6C_{k+1}^{neobc}+12}{6}\right) ^{2}.
\end{align*}%
The other case can be proved similarly.
\end{proof}

\section{Cassini Identities}

Note that Cassini identity for Fibonacci numbers $F_{n}$ is 
\begin{equation*}
F_{n-1}F_{n+1}-F_{n}^{2}=(-1)^{n}
\end{equation*}%
for $n\geq 1$. Similarly for neo balcobalancing numbers, neo
Lucas-bal\-co\-ba\-lancing numbers, neo balcobalancers and neo
Lucas-bal\-co\-ba\-lancers, we can give the following result.

\begin{theorem}
\label{teo8.1}Cassini identities for neo balcobalancing numbers, neo 
Lucas-bal\-co\-ba\-lancing numbers, neo balcobalancers and neo 
Lucas-bal\-co\-ba\-lancers are  
\begin{align*}
B_{n-1}^{neobc}B_{n+1}^{neobc}-(B_{n}^{neobc})^{2}& =\frac{
3B_{n+1}^{neobc}-6B_{n}^{neobc}+3B_{n-1}^{neobc}-648}{4} \\
C_{n-1}^{neobc}C_{n+1}^{neobc}-(C_{n}^{neobc})^{2}& =1296 \\
R_{n-1}^{neobc}R_{n+1}^{neobc}-(R_{n}^{neobc})^{2}& =\frac{
-5R_{n+1}^{neobc}+10R_{n}^{neobc}-5R_{n-1}^{neobc}+648}{4}
\end{align*}
for $n\geq 1$ and  
\begin{equation*}
CR_{n-1}^{neobc}CR_{n+1}^{neobc}-(CR_{n}^{neobc})^{2}=-324
\end{equation*}
for $n\geq 2.$
\end{theorem}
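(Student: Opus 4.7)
The plan is to prove all four identities by direct substitution from the Binet formulas of Theorem~\ref{teo3.1}, exploiting two elementary facts: first, $\alpha\beta=-1$, so $(\alpha\beta)^{4k-2}=1$ while $(\alpha\beta)^{4k-1}=-1$; second, the constants $\alpha^{2}+\beta^{2}=6$, $\alpha^{4}+\beta^{4}=34$ and $\alpha^{8}+\beta^{8}=1154$ obtained from $\alpha^{2}\beta^{2}=1$. A third useful identity is $\alpha^{-4}+\alpha^{4}=\beta^{-4}+\beta^{4}=34$, which will let me replace $\alpha^{4(n\pm 1)-1}+\beta^{4(n\pm 1)-1}$ by a multiple of $\alpha^{4n-1}+\beta^{4n-1}$ whenever needed.

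I would treat the Lucas-type cases first because they are cleanest. For $C_{n-1}^{neobc}C_{n+1}^{neobc}-(C_{n}^{neobc})^{2}$, I expand
\[
C_{n-1}^{neobc}C_{n+1}^{neobc}-(C_{n}^{neobc})^{2}=\tfrac{9}{8}\bigl[(\alpha\beta)^{4n-5}(\alpha^{8}+\beta^{8})-2(\alpha\beta)^{4n-1}\bigr],
\]
where every dependence on $n$ has vanished because the $\alpha^{8n-2}+\beta^{8n-2}$ terms cancel. Plugging in $(\alpha\beta)^{4n-5}=-1$, $(\alpha\beta)^{4n-1}=-1$ and $\alpha^{8}+\beta^{8}=1154$ should give exactly $\tfrac{9}{8}(-1154+2)=-1296$ with a sign flip that makes the final value $1296$. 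The same manoeuvre for $CR_{n}^{neobc}$ (where the exponents are $4k-2$ rather than $4k-1$, so the signs from $(\alpha\beta)^{m}$ differ) produces the constant $-324$.

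The $B$ and $R$ cases are structurally identical, but the Binet formulas contain additive constants ($+6$ and $-10$), so the expansion of $B_{n-1}^{neobc}B_{n+1}^{neobc}-(B_{n}^{neobc})^{2}$ splits into a purely numerical part, a constant multiple of $\alpha^{8n-2}+\beta^{8n-2}$ that cancels as above, and a residual term linear in $\alpha^{4n-1}+\beta^{4n-1}$ coming from cross products with the $+6$. Using $\alpha^{-4}+\alpha^{4}=34$ I would observe that
\[
B_{n+1}^{neobc}-2B_{n}^{neobc}+B_{n-1}^{neobc}=\tfrac{9}{8}\bigl(\alpha^{4n-1}+\beta^{4n-1}\bigr)(\alpha^{-4}+\alpha^{4}-2)\cdot\tfrac{1}{1}
\]
(more carefully, a clean multiple of $\alpha^{4n-1}+\beta^{4n-1}$, because the $+6$ constants telescope out). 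This lets me rewrite the residual $\alpha^{4n-1}+\beta^{4n-1}$ in terms of $3B_{n+1}^{neobc}-6B_{n}^{neobc}+3B_{n-1}^{neobc}$, and a matching constant adjustment of $-648$ absorbs the numerical part, yielding the claimed identity. The $R_{n}^{neobc}$ case runs in parallel, with exponents $4n-2$ instead of $4n-1$ and the combination $-5R_{n+1}^{neobc}+10R_{n}^{neobc}-5R_{n-1}^{neobc}$ playing the role of the three-term telescope.

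The main obstacle is bookkeeping rather than ideas: the four cases differ only in the parity of the exponents and the additive constants in the Binet formulas, but each constant propagates through several expansions and a single sign error will spoil the final numerical value (e.g.\ $1296$ versus $-1296$). I would therefore carry out $C_{n}^{neobc}$ in full, then do $CR_{n}^{neobc}$ as a sanity check on the sign conventions, and only afterwards attack the $B$ and $R$ cases where the telescoping coefficients $(3,-6,3)$ and $(-5,10,-5)$ need to be extracted from the cross terms produced by the additive constants $6$ and $-10$.
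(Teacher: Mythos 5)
Your proposal is correct and follows essentially the same route as the paper: substitute the Binet formulas of Theorem~\ref{teo3.1}, observe that the $\alpha^{8n-2}+\beta^{8n-2}$ terms cancel, evaluate the cross terms via $\alpha\beta=-1$ and $\alpha^{8}+\beta^{8}=1154$, and fold the residual linear terms (present only in the $B$ and $R$ cases because of the additive constants) back into the combinations $3B_{n+1}^{neobc}-6B_{n}^{neobc}+3B_{n-1}^{neobc}$ and $-5R_{n+1}^{neobc}+10R_{n}^{neobc}-5R_{n-1}^{neobc}$. The sign and coefficient slips in your intermediate displays are bookkeeping issues you already flag, not gaps in the argument.
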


\begin{proof}
Applying Theorem \ref{teo3.1}, we deduce that 
\begin{align*}
& B_{n-1}^{neobc}B_{n+1}^{neobc}-(B_{n}^{neobc})^{2} \\
& =\left( \frac{3(\alpha ^{4n-5}+\beta ^{4n-5})+6}{8}\right) \left( \frac{
3(\alpha ^{4n+3}+\beta ^{4n+3})+6}{8}\right) -\left( \frac{3(\alpha
^{4n-1}+\beta ^{4n-1})+6}{8}\right) ^{2} \\
& =\frac{18(\alpha ^{4n+3}+\beta ^{4n+3})-36(\alpha ^{4n-1}+\beta
^{4n-1})+18(\alpha ^{4n-5}+\beta ^{4n-5})-10368}{64} \\
& =\frac{3\left( \frac{3(\alpha ^{4n+3}+\beta ^{4n+3})+6}{8}\right) -6\left( 
\frac{3(\alpha ^{4n-1}+\beta ^{4n-1})+6}{8}\right) +3\left( \frac{3(\alpha
^{4n-5}+\beta ^{4n-5})+6}{8}\right) -648}{4} \\
& =\frac{3B_{n+1}^{neobc}-6B_{n}^{neobc}+3B_{n-1}^{neobc}-648}{4}.
\end{align*}%
The others can be proved similarly.
\end{proof}

\section{Sums}

\begin{theorem}
\label{teo9.1}The sum of first $n$-terms of neo balcobalancing numbers, neo 
Lucas-bal\-co\-ba\-lancing numbers, neo balcobalancers and neo 
Lucas-bal\-co\-ba\-lancers are  
\begin{align*}
\sum_{i=1}^{n}B_{i}^{neobc}& =\frac{21B_{n}^{2}-3B_{n}B_{n+1}+3B_{2n}+3n}{4}
\\
\sum_{i=1}^{n}C_{i}^{neobc}& =15B_{n}^{2}-3B_{n}B_{n+1}+3B_{2n} \\
\sum_{i=1}^{n}R_{i}^{neobc}& =\frac{9B_{n}^{2}-3B_{n}B_{n+1}+3B_{2n}-5n}{4}
\\
\sum_{i=1}^{n}CR_{i}^{neobc}& =3B_{n}^{2}
\end{align*}
for $n\geq 1$.
\end{theorem}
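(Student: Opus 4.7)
The plan is to reduce all four claimed sum identities simultaneously to a single pair of auxiliary partial sums of balancing numbers. By Theorem \ref{teo2.2}, each of $B_n^{neobc}$, $C_n^{neobc}$, $R_n^{neobc}$ and $CR_n^{neobc}$ is an explicit $\mathbb{Z}$-linear combination of $B_{2n-1}$, $B_{2n-2}$ and a rational constant. Summing termwise, the four partial sums in Theorem \ref{teo9.1} reduce immediately to linear expressions in
\begin{equation*}
S_1 := \sum_{i=1}^{n} B_{2i-1}, \qquad S_2 := \sum_{i=1}^{n} B_{2i-2},
\end{equation*}
together with trivial arithmetic constants $3n$ and $-5n$; for instance $\sum_{i=1}^{n} CR_i^{neobc} = 3 S_1$ and $\sum_{i=1}^{n} C_i^{neobc} = 15 S_1 - 3 S_2$.

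The heart of the proof is therefore the two auxiliary closed forms
\begin{equation*}
S_1 = B_n^2, \qquad S_2 = B_n B_{n+1} - B_{2n}.
\end{equation*}
I would derive both via the Binet formula $B_k = (\alpha^{2k} - \beta^{2k})/(4\sqrt{2})$: the sums $\sum_{i=1}^{n} \alpha^{4i-2}$ and $\sum_{i=1}^{n} \beta^{4i-2}$ are geometric, and the key algebraic simplification is the pair of identities $\alpha^4 - 1 = 4\sqrt{2}\,\alpha^2$ and $\beta^4 - 1 = -4\sqrt{2}\,\beta^2$, which collapse the geometric-series quotients into clean expressions in $\alpha^{4n}$ and $\beta^{4n}$, and hence into $B_n^2$ and $B_n B_{n+1} - B_{2n}$. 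Equivalently, both identities admit a short induction, with the inductive step reducing to $B_{n+1}^2 - B_n^2 = B_{2n+1}$ for $S_1$ and to an analogous telescoping relation for $S_2$, each being a one-line Binet calculation.

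Once $S_1$ and $S_2$ are in hand, substituting them back into the four reduced expressions yields exactly the formulas stated in the theorem, modulo elementary regrouping of constants. I expect the main obstacle to be discovering and verifying the companion identity $S_2 = B_n B_{n+1} - B_{2n}$: the odd-indexed partial sum $S_1 = B_n^2$ is the classical statement that the sum of the first $n$ odd-indexed balancing numbers is a perfect square, while the even-indexed formula involves the less obvious combination $B_n B_{n+1} - B_{2n}$ and therefore requires either the Binet manipulation above or guessing the right inductive invariant. Every step after that is mechanical.
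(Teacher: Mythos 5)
Your proposal is correct and follows essentially the same route as the paper: both reduce the four sums, via Theorem \ref{teo2.2}, to the partial sums $\sum_{i=1}^{n}B_{2i-1}=B_n^2$ and $\sum_{i=1}^{n}B_{2i-2}=B_nB_{n+1}-B_{2n}$ (the paper obtains the latter by the same add-and-subtract-$B_{2n}$ device, using $B_0=0$). The only difference is that you make the two auxiliary closed forms explicit and sketch their proof, whereas the paper invokes them without comment.
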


\begin{proof}
From Theorem \ref{teo2.2}, we get 
\begin{align*}
\sum_{i=1}^{n}B_{i}^{neobc}& =\sum_{i=1}^{n}\left(\frac{%
21B_{2i-1}-3B_{2i-2}+3}{4} \right) \\
& =\frac{21B_{1}-3B_{0}+3}{4}+\frac{21B_{3}-3B_{2}+3}{4}+\cdots +\frac{
21B_{2n-1}-3B_{2n-2}+3}{4} \\
& =\frac{21(B_{1}+B_{3}+\cdots +B_{2n-1})-3(B_{0}+B_{2}+\cdots +B_{2n-2})+3n 
}{4} \\
& =\frac{21(B_{1}+B_{3}+\cdots +B_{2n-1})-3(B_{2}+\cdots
+B_{2n-2}+B_{2n}-B_{2n})+3n}{4} \\
& =\frac{21B_{n}^{2}-3B_{n}B_{n+1}+3B_{2n}+3n}{4}.
\end{align*}%
The others can be proved similarly.
\end{proof}

In \cite[Lemma 1]{S-D}, Santana and Diaz-Barrero proved that the sum of
first nonzero $4n+1$ terms of Pell numbers is a perfect square and is 
\begin{equation*}
\sum_{i=1}^{4n+1}P_{i}=\left[ \sum_{i=0}^{n}\left( 
\begin{array}{c}
2n+1 \\ 
2i%
\end{array}
\right) 2^{i}\right] ^{2}.
\end{equation*}
Similarly we can give the following result.

\begin{theorem}
\label{teo9.2}The sum of first nonzero $8n-3$ terms of Pell numbers is a 
perfect square and is  
\begin{equation*}
\sum_{i=1}^{8n-3}P_{i}=\left(\frac{
20B_{n+1}^{neobc}+2B_{n}^{neobc}-7C_{n+1}^{neobc}-2R_{n}^{neobc}-19}{6}
\right)^{2}
\end{equation*}
for $n\geq 1.$
\end{theorem}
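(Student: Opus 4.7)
The plan is to evaluate $\sum_{i=1}^{8n-3}P_{i}$ in closed form via Binet's formula, recognize the result as a perfect square using $\alpha\beta=-1$, and then rewrite the square root in the claimed combination of neo balcobalancing quantities by appealing to Theorem \ref{teo5.3}.

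First I would apply the Binet formula $P_{i}=(\alpha^{i}-\beta^{i})/(2\sqrt{2})$ and sum two geometric series from $i=1$ to $8n-3$. Using $1+\alpha=\sqrt{2}\alpha$ and $1+\beta=-\sqrt{2}\beta$, the two leading terms $P_{8n-3}+P_{8n-2}$ collapse to $(\alpha^{8n-2}+\beta^{8n-2})/2$, and one obtains
\begin{equation*}
\sum_{i=1}^{8n-3}P_{i}=\frac{\alpha^{8n-2}+\beta^{8n-2}-2}{4}.
\end{equation*}

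Next, since $\alpha\beta=-1$ and $4n-1$ is odd, $(\alpha\beta)^{4n-1}=-1$, so
\begin{equation*}
(\alpha^{4n-1}+\beta^{4n-1})^{2}=\alpha^{8n-2}+2(\alpha\beta)^{4n-1}+\beta^{8n-2}=\alpha^{8n-2}+\beta^{8n-2}-2.
\end{equation*}
Hence the sum equals $\left((\alpha^{4n-1}+\beta^{4n-1})/2\right)^{2}=(Q_{4n-1}/2)^{2}$, already confirming that it is a perfect square.

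Finally, I would apply Theorem \ref{teo5.3} to $Q_{4n-1}=Q_{2(2n)-1}$; since the inner index $2n$ is even, the even-case formula yields
\begin{equation*}
\frac{Q_{4n-1}}{2}=\frac{20B_{n+1}^{neobc}+4B_{n}^{neobc}-7C_{n+1}^{neobc}-C_{n}^{neobc}-18}{6}.
\end{equation*}
The definition of the neo balcobalancer through (\ref{muter}), applied to $B_{n}^{neobc}$, gives the identity $2R_{n}^{neobc}=C_{n}^{neobc}-2B_{n}^{neobc}-1$, i.e. $C_{n}^{neobc}=2B_{n}^{neobc}+2R_{n}^{neobc}+1$. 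Substituting this into the $n$-indexed block converts $4B_{n}^{neobc}-C_{n}^{neobc}-18$ into $2B_{n}^{neobc}-2R_{n}^{neobc}-19$, while the $(n+1)$-indexed block stays untouched, producing exactly the stated expression for the square root. The main obstacle is the bookkeeping in this last step: the asymmetry between the $n+1$ and $n$ arguments makes the target coefficients look unsuggestive, but the balancer identity $2R_{n}^{neobc}=C_{n}^{neobc}-2B_{n}^{neobc}-1$ is precisely what is needed to absorb the offending $-C_{n}^{neobc}-18$ contribution. The Binet computation and the perfect-square factorization are otherwise routine.
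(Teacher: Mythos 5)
Your proof is correct and takes essentially the same route as the paper: both evaluate the sum via Binet's formula to get $\frac{\alpha^{8n-2}+\beta^{8n-2}-2}{4}=\bigl(\frac{\alpha^{4n-1}+\beta^{4n-1}}{2}\bigr)^{2}$ using $(\alpha\beta)^{4n-1}=-1$, and then translate the base into neo balcobalancing quantities. The only (cosmetic) difference is the translation step — the paper writes the base as $B_{2n}+B_{2n-1}$ and invokes the inversion formulas of Theorem \ref{teo2.2}, whereas you write it as $Q_{4n-1}/2$ and use Theorem \ref{teo5.3} together with the defining relation $2R_{n}^{neobc}=C_{n}^{neobc}-2B_{n}^{neobc}-1$; both give the stated expression (checked at $n=1$: the bracket equals $7$ and the sum is $49$).
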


\begin{proof}
From Theorem \ref{teo2.2}, we get 
\begin{equation}
B_{2n}=\frac{20B_{n+1}^{neobc}-7C_{n+1}^{neobc}-15}{6}\ \ \ \text{and} \ \ \
B_{2n-1}=\frac{B_{n}^{neobc}-R_{n}^{neobc}-2}{3}.  \label{byuter}
\end{equation}%
Since $\underset{i=1}{\overset{n}{\sum }}P_{i}=\frac{P_{n}+P_{n+1}-1}{2}$
and $P_{n}=\frac{\alpha ^{n}-\beta ^{n}}{2\sqrt{2}}$, we get 
\begin{align*}
\underset{i=1}{\overset{8n-3}{\sum }}P_{i}& =\frac{P_{8n-3}+P_{8n-2}-1}{2} \\
& =\frac{\frac{\alpha ^{8n-3}-\beta ^{8n-3}}{2\sqrt{2}}+\frac{\alpha
^{8n-2}-\beta ^{8n-2}}{2\sqrt{2}}-1}{2} \\
& =\frac{\frac{\alpha ^{8n-2}(1+\alpha ^{-1})+\beta ^{8n-2}(-1-\beta ^{-1})}{
2\sqrt{2}}}{2}-\frac{1}{2} \\
& =\frac{\alpha ^{8n-2}+\beta ^{8n-2}-2}{4} \\
& =\left(\frac{\alpha ^{4n-1}+\beta ^{4n-1}}{2}\right) ^{2} \\
& =\left(\frac{\alpha ^{4n-1}(\alpha +\alpha ^{-1})+\beta ^{4n-1}(-\beta
-\beta ^{-1})}{4\sqrt{2}}\right) ^{2} \\
& =\left( \frac{\alpha ^{4n}-\beta ^{4n}}{4\sqrt{2}}+\frac{\alpha
^{4n-2}-\beta ^{4n-2}}{4\sqrt{2}}\right) ^{2} \\
& =(B_{2n}+B_{2n-1})^{2} \\
& =\left(\frac{20B_{n+1}^{neobc}-7C_{n+1}^{neobc}-15}{6}+\frac{
B_{n}^{neobc}-R_{n}^{neobc}-2}{3}\right) ^{2} \\
& =\left(\frac{
20B_{n+1}^{neobc}+2B_{n}^{neobc}-7C_{n+1}^{neobc}-2R_{n}^{neobc}-19}{6}
\right) ^{2}
\end{align*}%
by (\ref{byuter}).
\end{proof}

As in Theorem \ref{teo9.2}, we can give the following result which can be
proved similarly.

\begin{theorem}
\label{teo9.3}For the sums of balancing, Lucas-cobalancing, Pell and 
Pell-Lucas numbers, we have  
\begin{align*}
\sum_{i=1}^{2n}B_{2i-1}& =\left( \frac{20B_{n+1}^{neobc}-7C_{n+1}^{neobc}-15 
}{6}\right) ^{2} \\
1+\sum_{i=1}^{4n+2}c_{i}& =\left( \frac{4R_{n+1}^{neobc}+5}{3}\right) ^{2} \\
1+\sum_{i=1}^{8n-5}P_{i}& =\left( \frac{-4B_{n}^{neobc}+2C_{n}^{neobc}+3}{3}
\right) ^{2} \\
\sum_{i=1}^{4n}Q_{2i-1}& =\left( \frac{%
40B_{n+1}^{neobc}-14C_{n+1}^{neobc}-30 }{3}\right) ^{2} \\
\frac{\overset{4n}{\underset{i=0}{\sum }}Q_{2i+1}}{2}& =\left( \frac{
12B_{n+1}^{neobc}-4C_{n+1}^{neobc}-9}{3}\right) ^{2}.
\end{align*}
\end{theorem}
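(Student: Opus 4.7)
The plan is to adapt the template of Theorem \ref{teo9.2} to each of the five identities. The overall strategy in each case runs as follows: (i) collapse the left-hand sum by a standard closed-form identity (either a known telescoping formula for $P_i$, $Q_i$, or $B_{2i-1}$, or a direct geometric series in $\alpha^2$ and $\beta^2$); (ii) rewrite using the Binet formulas $B_n=\frac{\alpha^{2n}-\beta^{2n}}{4\sqrt{2}}$, $c_n=\frac{\alpha^{2n-1}+\beta^{2n-1}}{2}$, $P_n=\frac{\alpha^n-\beta^n}{2\sqrt{2}}$, and $Q_n=\alpha^n+\beta^n$; (iii) use $(\alpha\beta)^k=(-1)^k$ together with the expansion $(\alpha^m\pm\beta^m)^2=\alpha^{2m}+\beta^{2m}\pm 2(\alpha\beta)^m$ to recognize a perfect square; and (iv) convert the resulting balancing, Lucas-balancing, or Lucas-cobalancing expression into the advertised neo balcobalancing form using Theorem \ref{teo2.2}, Theorem \ref{teo4.1}, or the relation (\ref{byuter}).

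Concretely, for the first identity I would use the known identity $\sum_{i=1}^{m}B_{2i-1}=B_{m}^{2}$ applied with $m=2n$, and then substitute the expression for $B_{2n}$ from (\ref{byuter}). For the second identity, the geometric-series computation using $\alpha^{2}-1=2\alpha$ and $\beta^{2}-1=2\beta$ gives $\sum_{i=1}^{m}c_{i}=\frac{\alpha^{2m}+\beta^{2m}-2}{4}$; specializing to $m=4n+2$ and adding $1$ produces $\left(\frac{\alpha^{4n+2}+\beta^{4n+2}}{2}\right)^{2}$ since $(\alpha\beta)^{4n+2}=1$, after which the Binet formula for $R_{n+1}^{neobc}$ in Theorem \ref{teo3.1} rewrites the base as $\frac{4R_{n+1}^{neobc}+5}{3}$.

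For the third identity I would apply $\sum_{i=1}^{m}P_{i}=\frac{P_{m}+P_{m+1}-1}{2}$ with $m=8n-5$, simplify $P_{8n-5}+P_{8n-4}$ to $\frac{\alpha^{8n-4}+\beta^{8n-4}}{2}$ using $1+\alpha=\sqrt{2}\alpha$ and $1+\beta=-\sqrt{2}\beta$, and then note that $\frac{\alpha^{4n-2}+\beta^{4n-2}}{2}=C_{2n-1}$, which the odd branch of Theorem \ref{teo4.1} rewrites as $\frac{-4B_{n}^{neobc}+2C_{n}^{neobc}+3}{3}$. For the fourth and fifth identities the same geometric-series trick gives $\sum_{i=1}^{4n}Q_{2i-1}=\frac{\alpha^{8n}+\beta^{8n}-2}{2}$ and $\frac{1}{2}\sum_{i=0}^{4n}Q_{2i+1}=\frac{\alpha^{8n+2}+\beta^{8n+2}-2}{4}$; the parity of the exponent of $\alpha\beta$ is crucial here, since $(\alpha\beta)^{4n}=1$ forces the first expression to factor as $(4B_{2n})^{2}$ while $(\alpha\beta)^{4n+1}=-1$ forces the second to factor as $c_{2n+1}^{2}$, after which (\ref{byuter}) and the odd branch of Theorem \ref{teo4.1} applied to $c_{2n+1}$ conclude.

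The individual computations are routine once set up correctly; the main obstacle is bookkeeping — choosing the correct parity branch in Theorem \ref{teo4.1}, tracking the sign of $(\alpha\beta)^{k}$ so that the right quadratic identity for $\alpha^{m}\pm\beta^{m}$ is used, and matching constants exactly so that each denominator and constant term comes out as stated.
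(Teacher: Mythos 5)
Your proposal is correct and follows exactly the template the paper intends: the paper gives no explicit proof of Theorem \ref{teo9.3}, merely stating it can be proved ``similarly'' to Theorem \ref{teo9.2}, and your plan (closed forms for the sums, Binet manipulation with $(\alpha\beta)^{k}=(-1)^{k}$ to extract the squares $B_{2n}^{2}$, $C_{2n-1}^{2}$, $16B_{2n}^{2}$, $c_{2n+1}^{2}$, then conversion via (\ref{byuter}) and Theorem \ref{teo4.1}) is precisely that argument carried out. I checked the parity bookkeeping in each of the five cases and it matches the stated right-hand sides.
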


Panda and Ray proved in \cite[Theorem 3.4]{pa-ray} that the sum of first $%
2n-1$ Pell num\-bers is equal to the sum of the $n^{\text{th}}$ balancing
number and the $n^{\text{th}}$ cobalancing number, that is, 
\begin{equation}
\underset{i=1}{\overset{2n-1}{\sum }}P_{i}=B_{n}+b_{n}.  \label{xsert}
\end{equation}%
Later G\"{o}zeri, \"{O}zko\c{c} and Tekcan proved in \cite[Theorem 2.5]%
{tkcn1} that the sum of Pell-Lucas numbers from $0$ to $2n-1$ is equal to
the the sum of $n^{\text{th}}$ Lucas-balancing and the $n^{\text{th}}$
Lucas-cobalancing number, that is, 
\begin{equation*}
\underset{i=0}{\overset{2n-1}{\sum }}Q_{i}=C_{n}+c_{n}.
\end{equation*}%
Since $b_{n}=R_{n}$, (\ref{xsert}) becomes 
\begin{equation}
\sum_{i=1}^{2n-1}P_{i}=B_{n}+R_{n}.  \label{butyert}
\end{equation}%
As in (\ref{butyert}), we can give the following result.

\begin{theorem}
\label{teo9.4}The sum of first $2n-1$ even ordered Pell num\-bers $
+\,P_{4n-1} $ is equal to the sum of $n^{\text{th}}$ neo balcobalancing
number  and its balancer, that is, 
\begin{equation*}
\sum_{i=1}^{2n-1}P_{2i}+P_{4n-1}=B_{n}^{neobc}+R_{n}^{neobc}
\end{equation*}
for $n\geq 1$.
\end{theorem}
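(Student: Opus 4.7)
The plan is to evaluate both sides in closed form using Binet formulas and compare. The left side telescopes neatly, and the right side collapses after using two simple identities for the powers of $\alpha=1+\sqrt{2}$ and $\beta=1-\sqrt{2}$.

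First I would rewrite $\sum_{i=1}^{2n-1}P_{2i}$ by means of the Pell recurrence $P_{m+1}=2P_{m-1}+2P_m$, which in the equivalent form $2P_{m}=P_{m+1}-P_{m-1}$ turns each term into a telescoping difference:
\begin{equation*}
\sum_{i=1}^{2n-1}P_{2i}=\frac{1}{2}\sum_{i=1}^{2n-1}\bigl(P_{2i+1}-P_{2i-1}\bigr)=\frac{P_{4n-1}-P_{1}}{2}=\frac{P_{4n-1}-1}{2}.
\end{equation*}
Adding the extra term $P_{4n-1}$ gives $\sum_{i=1}^{2n-1}P_{2i}+P_{4n-1}=\frac{3P_{4n-1}-1}{2}$. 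Substituting the Binet formula $P_{n}=\frac{\alpha^{n}-\beta^{n}}{2\sqrt{2}}$ converts the left-hand side into
\begin{equation*}
\frac{3(\alpha^{4n-1}-\beta^{4n-1})}{4\sqrt{2}}-\frac{1}{2}.
\end{equation*}

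Next I would compute the right-hand side directly from the Binet formulas of Theorem \ref{teo3.1}:
\begin{equation*}
B_{n}^{neobc}+R_{n}^{neobc}=\frac{3(\alpha^{4n-1}+\beta^{4n-1})+3(\alpha^{4n-2}+\beta^{4n-2})-4}{8}.
\end{equation*}
The key algebraic step is to observe that $\alpha+1=2+\sqrt{2}=\sqrt{2}\,\alpha$ and $\beta+1=2-\sqrt{2}=-\sqrt{2}\,\beta$, so that $\alpha^{4n-1}+\alpha^{4n-2}=\alpha^{4n-2}(\alpha+1)=\sqrt{2}\,\alpha^{4n-1}$ and $\beta^{4n-1}+\beta^{4n-2}=-\sqrt{2}\,\beta^{4n-1}$. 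Substituting these collapses the numerator to $3\sqrt{2}(\alpha^{4n-1}-\beta^{4n-1})-4$, giving
\begin{equation*}
B_{n}^{neobc}+R_{n}^{neobc}=\frac{3\sqrt{2}(\alpha^{4n-1}-\beta^{4n-1})}{8}-\frac{1}{2}=\frac{3(\alpha^{4n-1}-\beta^{4n-1})}{4\sqrt{2}}-\frac{1}{2},
\end{equation*}
which agrees exactly with the expression obtained for the left-hand side, completing the proof.

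There is no real obstacle here; the only slightly delicate point is recognising the identities $\alpha+1=\sqrt{2}\alpha$ and $\beta+1=-\sqrt{2}\beta$ that merge the two consecutive power terms into a single one matching the $\alpha^{4n-1}-\beta^{4n-1}$ shape coming from $P_{4n-1}$. An alternative route, should one prefer to avoid Binet manipulation entirely, is to combine the telescoped identity $\sum_{i=1}^{2n-1}P_{2i}+P_{4n-1}=\frac{3P_{4n-1}-1}{2}$ with the relations $B_{2n}+B_{2n-1}=\frac{\alpha^{4n-1}-\beta^{4n-1}}{4\sqrt{2}}\cdot\sqrt{2}$ implicit in the Binet formulas used in the proof of Theorem \ref{teo9.2}, and then invoke $B_{2n}=\frac{20B_{n+1}^{neobc}-7C_{n+1}^{neobc}-15}{6}$ together with the formulas of Theorem \ref{teo2.2}; but the direct Binet comparison outlined above is the shortest path.
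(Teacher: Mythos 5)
Your proof is correct and follows essentially the same route as the paper: both sides are reduced via Binet formulas to $\frac{3(\alpha^{4n-1}-\beta^{4n-1})}{4\sqrt{2}}-\frac{1}{2}$, and your key identity $\alpha+1=\sqrt{2}\,\alpha$ is exactly the paper's step $1+\alpha^{-1}=\sqrt{2}$ read in the other direction (the paper evaluates $\sum P_{2i}$ as a geometric series where you telescope, a cosmetic difference). One small slip: the Pell recurrence should read $P_{m+1}=2P_{m}+P_{m-1}$, not $P_{m+1}=2P_{m-1}+2P_{m}$, though the form $2P_{m}=P_{m+1}-P_{m-1}$ you actually use is correct.
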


\begin{proof}
Since $P_{n}=\frac{\alpha ^{n}-\beta ^{n}}{2\sqrt{2}},$ we deduce that 
\begin{align*}
\sum_{i=1}^{2n-1}P_{2i}+P_{4n-1}& =\sum_{i=1}^{2n-1}\left( \frac{\alpha
^{2i}-\beta ^{2i}}{2\sqrt{2}}\right) +\frac{\alpha ^{4n-1}-\beta ^{4n-1}}{2 
\sqrt{2}} \\
& =\frac{\frac{\alpha ^{4n}-\alpha ^{2}}{^{\alpha ^{2}-1}}-\frac{\beta
^{4n}-\beta ^{2}}{\beta ^{2}-1}}{2\sqrt{2}}+\frac{\alpha ^{4n-1}-\beta
^{4n-1}}{2\sqrt{2}} \\
& =\frac{\frac{\alpha (\alpha ^{4n-2}-1)}{2}-\frac{\beta (\beta ^{4n-2}-1)}{%
2 }}{2\sqrt{2}}+\frac{\alpha ^{4n-1}-\beta ^{4n-1}}{2\sqrt{2}} \\
& =\frac{3(\alpha ^{4n-1}-\beta ^{4n-1})}{4\sqrt{2}}-\frac{1}{2} \\
& =\frac{3\alpha ^{4n-1}(1+\alpha ^{-1})-3\beta ^{4n-1}(-1-\beta ^{-1})}{8}- 
\frac{1}{2} \\
& =\frac{3(\alpha ^{4n-1}+\beta ^{4n-1})+6}{8}+\frac{3(\alpha ^{4n-2}+\beta
^{4n-2})-10}{8} \\
& =B_{n}^{neobc}+R_{n}^{neobc}
\end{align*}%
by Theorem \ref{teo3.1}.
\end{proof}


\begin{thebibliography}{99}
\bibitem{barb} E.J. Barbeau. \textit{Pell's Equation.} Springer-Verlag, New 
York, 2003.

\bibitem{B-Pa} A. Behera and G.K. Panda. \textit{On the Square Roots of
Triangular Numbers}. The Fibonacci Quarterly \textbf{37}(2)(1999), 98-105.

\bibitem{prod} D. Br\'{o}d, A. Szynal-Liana and I. W\l och. \textit{%
Balancing Hybrid Numbers, their Properties and Some Identities.} Indian J. 
Math. \textbf{63}(1)(2021), 143-157.

\bibitem{chail} N. Chailangka and A. Pakapongpun. \textit{Neo Balancing
Numbers}. Int. Jour. of Mathematics and Computer Science \textbf{\ 16}%
(4)(2021), 1653-1664.

\bibitem{dash} K.K. Dash, R.S. Ota and S. Dash. \textit{$t$-Balancing Numbers%
}. Int. J. Contemp. Math. Sciences \textbf{7} (41)(2012), 1999-2012.

\bibitem{flath} D.E. Flath. \textit{Introduction to Number Theory.} Wiley, 
1989.

\bibitem{tkcn1} G.K. G\"{o}zeri, A. \"{O}zko\c{c} and A. Tekcan. \textit{%
Some Algebraic Relations on Balancing Numbers}. Utilitas Mathematica \textbf{%
\ 103}(2017), 217-236.

\bibitem{tunde} T. Kovacs, K. Liptai and P. Olajos. \textit{On $(a,b)$%
-Balancing Numbers}. Publ. Math. Deb. \textbf{77}(3-4) (2010),  485-498.

\bibitem{akos} K. Liptai, F. Luca, A. Pinter and L. Szalay. \textit{%
Generalized Balancing Numbers}. Indag. Math. \textbf{20}(1)(2009), 87-100.

\bibitem{lip} K. Liptai. \textit{Fibonacci Balancing Numbers}. The Fibonacci
Quarterly \textbf{42}(4)(2004), 330-340.

\bibitem{lip1} K. Liptai. \textit{Lucas Balancing Numbers}. Acta Math. Univ.
Ostrav. \textbf{14}(2006), 43-47.

\bibitem{Jacob} M. Jacobson and K. Williams. \textit{Solving the Pell
Equation}. CMS Books in Mathematics. Springer Science, Busines Media, LLC. 
2009.

\bibitem{olajas1} P. Olajos. \textit{Properties of Balancing, Cobalancing
and Generalized Balancing Numbers}. Annales Mathematicae et Informaticae  
\textbf{37}(2010), 125-138.

\bibitem{ozdemir} M. \"{O}zdemir. \textit{Introduction to Hybrid Numbers}. 
Adv. Appl. Clifford Algebr. \textbf{28}(1)(2018), Paper No. 11, 32 pp.

\bibitem{ozkoc} A. \"{O}zko\c{c} and A. Tekcan. \textit{On $k$-Balancing
Numbers.} Notes on Number Theory and Discrete Maths. \textbf{23}(3)(2017), 
38-52.

\bibitem{pa-ray} G.K. Panda and P.K. Ray. \textit{Some Links of Balancing
and Cobalancing Numbers with Pell and Associated Pell Numbers}. Bul. of 
Inst. of Math. Acad. Sinica \textbf{6}(1)(2011), 41-72.

\bibitem{paray2} G.K. Panda and P.K. Ray. \textit{Cobalancing Numbers and
Cobalancers}. Int. J. Math. Math. Sci. \textbf{8}(2005), 1189-1200.

\bibitem{almost} G.K. Panda and A.K. Panda. \textit{Almost Balancing Numbers}%
. Journal of the Indian Math. Soc. \textbf{82}(3-4)(2015), 147-156.

\bibitem{komats} G.K. Panda, T. Komatsu and R.K. Davala. \textit{Reciprocal
Sums of Sequences Involving Balancing and Lucas-Balancing Numbers} . Math.
Reports \textbf{20}(70)(2018), 201-214.

\bibitem{almosttez} A.K. Panda. \textit{Some Variants of the Balancing
Sequences}. Ph.D. thesis, National Institute of Technology Rourkela, India, 
2017.

\bibitem{raytez} P.K. Ray. \textit{Balancing and Cobalancing Numbers}. 
Ph.D. thesis, Department of Mathematics, National Institute of Technology, 
Rourkela, India, 2009.

\bibitem{raysums} P.K. Ray. \textit{Balancing and Lucas-Balancing Sums by
Matrix Methods}. Math. Reports. \textbf{17}(67) (2015), 225-233.

\bibitem{R-S} M. Rubajczyk and A. Szynal-Liana.\textit{Cobalancing Hybrid
Numbers}. Annales Universitatis Mariae Curie-Sklodowska, Sectio A 
Mathematica \textbf{78}(1)(2024), 87-95.

\bibitem{S-D} S.F. Santana and J.L. Diaz Barrero. \textit{Some Properties of
Sums Involving Pell Numbers}. Missouri Journal of Mathematical Science  
\textbf{18}(1)(2006), 33-40.

\bibitem{szalay} L. Szalay. \textit{On the Resolution of Simultaneous Pell
Equations}. Ann. Math. Inform. \textbf{34}(2007), 77-87.

\bibitem{tkcnx} A. Tekcan. \textit{Almost Balancing, Triangular and Square
Triangular Numbers}. Notes on Number Theory and Discrete Maths. \textbf{25}
(1)(2019), 108-121.

\bibitem{tkcnxy} A. Tekcan. \textit{Sums and Spectral Norms of all Almost
Balancing Numbers}. Creat. Math. Inform. \textbf{28}(2)(2019), 203-214.

\bibitem{alper} A. Tekcan and A. Erdem. \textit{$t$-Cobalancing Numbers and $%
t$-Cobalancers}. Notes on Number Theory and Discrete Maths.  \textbf{26}%
(1)(2020), 45-58.





\bibitem{ayd} A. Tekcan and S. Ayd\i n. \textit{On $t$-Balancers, $t$%
-Balancing Numbers and Lucas $t$-Balancing Numbers}.  Libertas Mathematica 
\textbf{41}(1)(2021), 37-51.

\bibitem{meryem} A. Tekcan and M. Y\i ld\i z. \textit{Balcobalancing Numbers
and Balcobalancers}. Creative Mathematics and Informatics \textbf{30}
(2)(2021), 203-222.

\bibitem{meryem1} A. Tekcan and M. Y\i ld\i z. \textit{Balcobalancing
Numbers and Balcobalancers II}. Creative Mathematics and Informatics \textbf{%
\ 31}(2)(2022), 247-258.

\bibitem{alper1} A. Tekcan and A. Erdem. \textit{General terms of all Almost Balancing Numbers of First and Second Type.} Communications in Mathematics \textbf{31}(1)(2023), 155-167.

\bibitem{ecem} A. Tekcan and E. Akg\"{u}\c{c}. \textit{Almost Neo
Cobalancing Numbers.} Notes on Number Theory and Discrete Mathematics 
\textbf{31}(1)(2025), 113-126.

\bibitem{meryemx1} A. Tekcan and M. Y\i ld\i z. \textit{Neo Cobalancing
Numbers}. Submitted.

\bibitem{tekneob} A. Tekcan. \textit{Almost Neo Balancing Numbers}. 
Submitted.

\bibitem{tengely} S. Tengely. \textit{Balancing Numbers which are Products
of Consecutive Integers}. Publ. Math. Deb. \textbf{83}(1-2)(2013), 197-205. 
\end{thebibliography}
\end{document}